\definecolor{aleacolor}{rgb}{0.16,0.59,0.78}
\renewcommand{\cite}{\citet}
\theoremstyle{plain}
\newtheorem{theorem}{Theorem}[section]                                          
\newtheorem{proposition}[theorem]{Proposition}                          
\newtheorem{corollary}[theorem]{Corollary}
\theoremstyle{definition}
\theoremstyle{remark}
\newtheorem{remark}[theorem]{Remark}
\newtheorem{example}[theorem]{Example}
\makeatletter \@addtoreset{equation}{section} \makeatother
\newcommand{\rme}{{\rm e}}
\newcommand{\rmd}{{\rm d}}
\newcommand{\ep}{{\varepsilon}}
\newcommand{\ind}{\mathbf{1}}
\newcommand{\pd}{{\partial}}
\newcommand{\PP}{{\mathbb P}}
\newcommand{\QQ}{{\mathbb Q}}
\newcommand{\EE}{{\mathbb E}}
\newcommand{\const}{\mathrm{const}}
\newcommand{\RN}{{\dfrac{\rmd\QQ}{\rmd\PP}}}
\newcounter{lis}
\begin{document}
\title[On  jump-diffusion processes with regime switching]
{On  jump-diffusion processes with regime switching: martingale approach }

\author{Antonio Di Crescenzo}
\author{Nikita Ratanov}
\address{
Dipartimento di Matematica, Universit\`a di Salerno,\newline
Via Giovanni Paolo II, 132 \newline 84084 - Fisciano (SA),   
 Italia}
 \email{adicrescenzo@unisa.it}
 \urladdr{\url{http://www.unisa.it/docenti/antoniodicrescenzo/index}}
 
 \address{Facultad de Econom\'{\i}a, Universidad del Rosario,\newline
Calle 12c, No.4-69\newline
Bogot\'a,  Colombia}     
 \email{nratanov@urosario.edu.co}
\urladdr{
\url{http://www.urosario.edu.co/Profesores/Listado-de-profesores/R/Nikita-Ratanov}}

\subjclass[2000]
{60G44; 60J75; 60K99}
\keywords{ jump-telegraph process; jump-diffusion process; 
martingales;  relative entropy; financial modelling}

\begin{abstract}
 We study  jump-diffusion processes with parameters switching at random times.
Being motivated by possible applications, 
we  characterise equivalent martingale measures for these processes by means of the
 relative entropy.
The minimal entropy approach is also developed. 
It is shown that in contrast to the case of  L\'evy processes,
for this model an Esscher transformation does not produce the minimal relative entropy.
\end{abstract}

\maketitle

\section{Introduction}

We investigate some basic properties of the jump-diffusion processes 
\[X(t)=T^c(t)+N^h(t)+W^\sigma(t),\quad t\geq0,\]
with time-dependent deterministic 
driving parameters  
switching simultaneously
at random times.
Here
 $W^\sigma$ denotes the Wiener part, defined by the stochastic integral (w.r.t. Brownian motion $B$)
of the  process which is formed by switching at random times
of the deterministic diffusion coefficients 
 $\sigma_i=\sigma_i(t),$ $i\in D:=\{1,\ldots, d\},\;d\geq2;$ 
by $N^h$ is denoted the jump part, i.e. the stochastic integral 
(w.r.t.  process $N=N(t)$ counting number of the regime switchings)
applied to the 
switching functions $h_i=h_i(t),$  $\;i\in D,$ 
 and $T^c$ is  
path-by-path integral in time $t$ of switching velocity regimes $c_i=c_i(t),\;i\in D$
(see the detailed definitions in Section \ref{sec:2}).
In the case of $d=2$ and exponentially distributed inter-switching time intervals
such processes are called  telegraph-jump-diffusion  processes, \cite{BJPS},
or Markov modulated jump-diffusion. 

In this paper the random inter-switching time intervals are assumed to be independent and arbitrary distributed.
In general, such a  process is not Markovian, 
and it is not a L\'evy process as well. 
We  study these processes from the martingale point of view,
including  Girsanov's measure transform.

Similar models without a diffusion component 
were considered first in \cite{MR} and more recently 
have been analysed in detail by \cite{DiC13-1, DiC13-2}, \cite{MCAP,STAPRO13,STAPRO14}, 
see also the particular cases in 
\cite{DiC01,DiC10}. The model with missing jump component is presented in \cite{DiC14,DiC13}.
The processes with random driving parameters are studied in \cite{STAPRO13}. 
The recent paper  \cite{SAA} is related to the 
model of random switching intensities.

This setting is widely used for applications, see e.g. \cite{weiss}.
The martingale approach developed in this paper is motivated by
 financial modelling, 
see  \cite{Rung} for jump-diffusion model. 
See also \cite{R2007} 
for jump-telegraph model
(and a more detailed presentation  in \cite{KR}).

The Markov modulated jump-diffusion model
of asset pricing 
(with additive jumps superimposed on the diffusion) 
 has been studied before, see  \cite{XG, BJPS}.  
 This model for a single risky asset  possesses infinitely many martingale measures, and thus the market is   
 incomplete. The  model can be completed 
 by  adding a further asset; for the jump-diffusion model see    \cite{Rung}, and 
 for the telegraph-jump-diffusion model see \cite{BJPS}. 
 
In this paper we explore another approach. We describe the set of equivalent martingale measures 
and determine  the F\"ollmer-Schweizer minimal probability measure
(so called the minimal entropy martingale measure (MEMM)), \cite{FS}. 
In his seminal paper \cite{Fritelli} Fritelli  has showed the equivalence between 
maximisation of expected exponential utility and the minimisation of the relative entropy.
Then, by this approach the models based on  L\'evy processes have been studied in \cite{FM}.

Observe that for L\'evy processes and for regime switching diffusions 
the usual technique is based on
an Esscher transform, which produces the MEMM, 
see \cite{FM,EscheSchweizer} and  \cite{ElliottAnnFin,Elliott2007}.

In our model this method does not work. The Esscher transform under regime switching
does not affect the switching intensities. 
In the L\'evy model \cite{FM}  this contradicts the minimal entropy condition
(if the  process is not already a martingale), 
see \eqref{cond:ME}.
In the case of the regime switching model \cite{ElliottAnnFin}
some  additional entropy given by
the jumps embedded into  this model can be reduced by more flexible measure transformation,
see Section \ref{sec:const}.

Moreover, in contrast with L\'evy model and with the regime switching diffusions without jumps
our model has the following important feature:
the entropy minimum  as well as  calibration of MEMM
depend on the time horizon under consideration.

The paper is organised as follows. Section \ref{sec:2} contains the definition and the main properties 
of the regime switching jump-diffusion processes with arbitrary distributions of inter-switching time intervals.
We construct Girsanov's transformation  in Section \ref{sec:3}. Then, we define the entropy   and derive
the corresponding Volterra equations.
In Section \ref{sec:4} we describe the set of equivalent martingale measures for 
 the regime switching jump-diffusion processes. The 
minimal entropy equivalent martingale measures are studied
in Section \ref{sec:const}    for the case of constant parameters.

Some applications including financial modelling will be presented elsewhere later.

\section{Generalised telegraph-jump-diffusion processes. Distributions and expectations}\label{sec:2}
\setcounter{equation}{0}

Let  $(\Omega, \mathcal{F}, \PP)$ be a complete probability space
with the  given right-continuous filtration $\mathcal{F}_t,\; t\geq0,$ satisfying the usual hypotheses, \cite{JYC}.
We start with a $d$-state  $\mathcal{F}_t$-adapted 
semi-Markov (see \cite{Jacobsen})
random  process $\ep,\;$  $\ep(t)\in D,$ $\;t\geq0.$ The switchings occur at random times 
$\tau_n,\;n\geq0,\;\tau_0=0,$ and process $\ep$ is  right-continuous with left-hand limits.

   Let $N=N(t)$ be the counting process, $N(t)=\max\{n~|~\tau_n\leq t\},\;t\geq0$.
   
   \subsection{The definition of telegraph-jump-diffusion process}
For the set of  deterministic measurable functions
 $z_i=z_i(t),\;t\geq0, \;i\in D,$ 
we construct first the piecewise deterministic random process 
$z^\dagger$  combining $z_i,\;i\in D,$  by means of 
 the switching process $\ep$: 
  \begin{equation}
\label{def:transform}
z^\dagger(t)=\sum_{n=1}^\infty
z_{\ep(\tau_{n-1})}(t-\tau_{n-1})\ind_{\Delta_n}(t),\qquad t\geq0.
\end{equation}
Here $\Delta_n:=[\tau_{n-1},\;\tau_{n}),\;n\geq1,$ and $\ind_\Delta(\cdot)$ is the indicator function.
Process $z^\dagger$ starts from the origin at the switching time $\tau_0=0$;
at each further switching time $\tau_n,\;n\geq1,$ the process $z^\dagger$ 
is renewed.

Second, consider the integrals of $z^\dagger$ of the following three types:
 \begin{enumerate}
  \item  the generalised $d$-state telegraph process    (path-by-path integral)
\begin{equation}
\label{def:TP}
T^z(t)=\int_0^tz^\dagger(u)\rmd u,\qquad t>0;
\end{equation}
  \item the pure jump process (integral w.r.t. counting process $N$)
\begin{equation}
\label{def:JP}
N^z(t)=\int_0^tz^\dagger(u)\rmd N(u),\qquad t>0,
\end{equation}
  \item  the Wiener process (It\^o integral)
\begin{equation}
\label{def:DP}
W^z(t)=\int_0^tz^\dagger(u)\rmd B(u),\qquad t>0,
\end{equation}
where  $B=B(t),\;t\geq0,$ is an $\mathcal{F}_t$-adapted Brownian motion, 
independent of $\ep$.
Note that $W^z=W^z(t),\;t\geq0,$ is a Gaussian $(\mathcal F_t, \PP)$-martingale. 
\end{enumerate}

Let $c_i=c_i(t),\; h_i=h_i(t)$ and  $\sigma_i=\sigma_i(t),\;t\geq0,\; i\in D,$
 be   deterministic measurable functions. 
We assume that functions 
$c_i$ are locally integrable, and
 $\sigma_i$ are locally square integrable, 
\begin{equation*}
\int_0^t\sigma_i(u)^2\rmd u
<\infty,\qquad t>0,\;i\in D.
\end{equation*}

In this paper we analyse  the  jump-diffusion process with switching regimes
$X=X(t),\;t\geq0,$ of the following form
\begin{equation}
\label{def:JTD}
X(t):=T^c(t)+N^h(t)+W^\sigma(t), \qquad t\geq0,
\end{equation}
with the components $T^c,\; N^h$ and $W^\sigma$ which  are defined 
by \eqref{def:TP}, \eqref{def:JP} and \eqref{def:DP}  respectively. Process $X$ satisfies the stochastic equation
\begin{equation*}
\label{def:JDTdiff}
\rmd X(t)=c_{\ep(\tau_{N(t)})}(t-\tau_{N(t)})\rmd t
+h_{\ep(\tau_{N(t)})}(T_{N(t)})\rmd N(t)
+\sigma_{\ep(\tau_{N(t)})}(t-\tau_{N(t)})\rmd B(t),\quad t>0.
\end{equation*}

Equivalently, processes $T^c,\; N^h,\; W^\sigma$ can be expressed by summing up  the 
 paths between the consequent switching instants $\tau_n$:
\begin{align}
\label{def:Tsum}
T^c(t)=&\int_0^tc^\dagger(u)\rmd u=\sum_{n=1}^{N(t)}l_{\ep(\tau_{n-1})}(T_n)+l_{\ep(\tau_{N(t)})}(t-\tau_{N(t)}),\\
\label{def:Jsum}
N^h(t)=&\int_0^th^\dagger(u)\rmd N(u)=\sum_{n=1}^{N(t)}h_{\ep(\tau_{n-1})}(T_n),\\
\label{def:Dsum}
W^\sigma(t)=&\int_0^t\sigma^\dagger(u)\rmd B(u)
=\sum_{n=1}^{N(t)}w_{\ep(\tau_{n-1})}(T_n)+w_{\ep(\tau_{N(t)})}(t-\tau_{N(t)}),
\end{align}
where  the following notations are used
\begin{equation}
\label{def:liwi}
l_i(t)=\int_0^tc_i(u)\rmd u,\qquad w_i(t)=\int_0^t\sigma_i(u)\rmd B(u),\quad t\geq0,\;i\in D.
\end{equation}
Note that $l_i(t),\;i\in D,$ are deterministic 
and the variables $w_i(t),\;i\in D,$ are   zero-mean normally distributed with 
the variances 
$\Sigma_i(t)^2:=\int_0^t\sigma_i(u)^2\rmd u,\;t>0,\; i\in D$.

We say that the regime switching  jump-diffusion process $X$   defined by \eqref{def:JTD}-\eqref{def:liwi}
 is characterised by the triplet
$\langle c, h, \sigma\rangle$ with distributions of inter-switching time intervals $T_n$
which are determined by the     hazard rate functions $\gamma_{ij}=\gamma_{ij}(t),$ $i, j\in D,$ 
 (see  the definition in    \eqref{def:hazard}).

We apply the notations $X_i$ and $T_i^c, N_i^h, W_i^\sigma$, if 
 the initial state $i\in D$ of the underlying process $\ep$ is given, $\ep(0)=i$.

Further, we will need the following explicit expression for the stochastic
 exponential $\mathcal E_t (X)$ of $X=T^c+N^h+W^\sigma$.
It is known, see e.g. \textup{\cite{JYC},}  that
\begin{equation}
\label{eq:stochexp}
\mathcal E_t (X)
=\exp\left(T^{c-\sigma^2/2}(t)+W^\sigma(t)\right)
\prod_{n=1}^{N_t}(1+h_{\ep(\tau_{n})}(T_n))
=\exp\left(Y(t)\right),
\end{equation}
where $Y(t)=T^{c-\sigma^2/2}(t)+N^{\ln(1+h)}(t)+W^\sigma(t),\;t\geq0.$
Here 
$
T^{c-\sigma^2/2}
$
is the telegraph process 
\eqref{def:Tsum} with the    velocities $c_i-\sigma_i^2/2$ instead of $c_i,\; i\in D,$ 
and  $N^{\ln(1+h)}$ is the pure jump process \eqref{def:Jsum}
with  the  jump values $\ln(1+h_i),$ instead of $h_i,\;i\in D,$ 
switching at random times $\tau_n,\;n\geq1.$

\subsection{Semi-Markov process $\ep=\ep(t)$}
 
In order to state the distribution of $X(t)$ 
we  introduce conditions on the driving processes $\ep$ and $\{\tau_n\}_{n\geq1}$.

Denote by $\overline F_{ij}=\overline F_{ij}(t),\;t>0,\;i, j\in D,$
the transition probabilities of the form \cite[(3.17)]{Jacobsen}:

\[\overline F_{ij}(t)=\PP\{\tau_1>t,\; \ep(\tau_1)=j~|~\ep(0)=i\},\quad t>0,\;i, j\in D.\]

Let $\overline F_{ij}(t)>0,\;t>0,\;i, j\in D.$
Consider the hazard functions $\Gamma_{ij},$  
\[
\Gamma_{ij}(t)=-\ln\overline F_{ij}(t),\quad t>0,\;i, j\in D.
\]
Let functions $\overline F_{ij}$ be differentiable, 
$\dfrac{\rmd \overline F_{ij}}{\rmd t}(t)=-f_{ij}(t),\;t>0,\;i, j\in D$.
Thus the hazard functions $\Gamma_{ij}=\Gamma_{ij}(t)$ are expressed by
\[
\Gamma_{ij}(t)=\int_0^t\gamma_{ij}(u)\rmd u,\qquad t\geq0.
\]
Here 
\begin{equation}
\label{def:hazard}
\gamma_{ij}(u):=\frac{f_{ij}(u)}{\overline F_{ij}(u)},\quad \;u>0,\qquad i, j\in D,
\end{equation}
are the hazard rate functions, and
 $f_{ij},\; i,j\in D,$ are the  density functions of the interarrival times.
    We assume the non-exploding condition to be hold:
\begin{equation}
\label{def:blowup}
\int_0^\infty\gamma_{ij}(u)\rmd u=+\infty,\qquad i, j\in D.
\end{equation}

Note that 
\begin{equation}\label{barFf}
\overline F_{ij}(t)=\exp\left(
-\int_0^t\gamma_{ij}(u)\rmd u
\right),\quad
f_{ij}(t) =\gamma_{ij}(t)\exp\left(
-\int_0^t\gamma_{ij}(u)\rmd u
\right),\quad t>0,\quad i, j\in D.
\end{equation}

 The survival function of the first switching time $\tau_1$ is given by 
\begin{equation}\label{eq:barFibarFij}
\overline F_{i}(t):=\PP\{\tau_1>t~|~\ep(0)=i\}=\prod_{j\in D\setminus\{i\}}\overline F_{ij}(t)
=\exp\left(-\int_0^t\gamma_i(u)\rmd u\right), \quad t>0,\qquad i\in D,
\end{equation}
where 
\begin{equation}
\label{def:hazardS}
\gamma_i=\sum\limits_{j\in D\setminus\{i\}}\gamma_{ij},\quad i\in D.
\end{equation}
Furthermore, let 
\begin{equation}
\label{eq:fi}
f_i(t)=-\frac{\rmd \overline F_i}{\rmd t}(t)
=\gamma_{i}(t)\exp\left(
-\int_0^t\gamma_{i}(u)\rmd u
\right), 
\quad t>0,\qquad i\in D.
\end{equation}

    Due to \eqref{barFf} $N$ is an inhomogeneous Poisson process with switchings at $\tau_n,\; n\geq1,$
 and with the instantaneous   intensities $\gamma_{ij}(t),\; t>0,\; i, j\in D$.

If the process $\ep$ is observed beginning from the time  $s,\; \tau_0\leq s<\tau_1,$
the corresponding conditional distributions can be described by the following conditional  survival functions,
\begin{equation}\label{barFbarf}
\begin{aligned}
\overline F_{ij}(t~|~s)=&\PP(\tau_1>t,\;\ep(\tau_1)=j~|~\tau_1>s,\; \ep(0)=i)
=\frac{\overline F_{ij}(t)}{\overline F_{i}(s)},
\end{aligned}\end{equation}
with the density functions 
\[
f_{ij}(t~|~s)=-\frac{\pd}{\pd t}\overline F_{ij}(t~|~s)=\frac{f_{ij}(t)}{\overline F_{i}(s)},\quad 0\leq s<t,
\quad i, j\in D.
\]
Moreover, let
\begin{equation}
\label{eq:barFibarFijs}\begin{aligned}
\overline F_i(t~|~s):=
\PP(\tau_1>t~|~\tau_1>s,\;\ep(0)=i)=&\exp\left(-\int_s^t\gamma_i(u)\rmd u\right)\\
f_i(t~|~s)=\gamma_i(t)&\exp\left(-\int_s^t\gamma_i(u)\rmd u\right),
\end{aligned}\qquad i\in D,\end{equation}
see \eqref{eq:barFibarFij}.

Notice that
\[\begin{aligned}
\overline F_{ij}(t~|~0)\equiv& \overline F_{ij}(t),\qquad \overline F_i(t~|~0)\equiv\overline F_i(t),\\
f_{ij}(t~|~0)\equiv& f_{ij}(t),\qquad f_i(t~|~0)\equiv f_i(t),
\end{aligned}
\qquad t>0,\qquad i,j\in D.\]

Assume the inter-switching  time intervals $T_n=\Delta \tau_n=\tau_{n}-\tau_{n-1},\;n\geq1,$
 to be independent, and process $\ep$ to be renewal in the following sense: $n\geq1,$
\begin{equation*}
\PP\{T_n>t~|~T_n>s,\; \ep(\tau_{n-1})=i\}=\overline F_{i}(t~|~s), 
\quad  t>s\geq0,\qquad   i\in D,%
\end{equation*}
see \eqref{barFbarf}-\eqref{eq:barFibarFijs}.

\subsection{The distribution and expectation of $X(t)$}

Our further analysis is based on the following observation. 

Let $\tau_1=\tau_1^{(i)}$ be the first switching time,
where $i\in D$ is the fixed 
 initial state  of the process $\ep,\; \ep(0)=i.$
Owing to 
the renewal character of the  counting process
$N$, see \cite{renewal},
we have the following  equalities in distribution:  
\begin{equation}
\label{eq:distrX}
\begin{aligned}
X_i(t)\stackrel{D}{=}(l_i(t)+w_i(t))\ind_{\tau_1>t}+&\left[l_i(\tau_1)+w_i(\tau_1)+h_i(\tau_1)
+\tilde X_{\ep_i(\tau_1)}(t-\tau_1)\right]\ind_{\tau_1<t},\\
& t>0,\;i\in D,
\end{aligned}\end{equation}
where $\tilde X$ is the regime switching  jump-diffusion process independent of $X$
which starts at time $\tau_1$.
 Here $\ind_A$ is the indicator 
of event $A$.

Let
\begin{equation*}
\label{def:dens}
\begin{aligned}
p_i(x, t):=\PP(X_i(t)\in\rmd x)/\rmd x,\quad &t\geq0,
\\
p_i(x, t~|~s):=\PP(X_i(t)\in\rmd x~|~N_i(s)=0)/\rmd x,\quad &t\geq s,
\end{aligned}
\qquad x\in(-\infty, \infty),\;i\in D,
\end{equation*}
be the  density functions of $X_i(t),\;i\in D$. 
Note that $p_i(x, t~|~0)=p_i(x, t)$,  $i\in D$.

In these terms equalities  \eqref{eq:distrX} take
the following form 
\begin{equation}\begin{aligned}
\label{eq:p0p1}
p_i(x, t~|~s)=&\psi_i(x-l_i(t), t)\overline F_i(t~|~s)   \\
 +&\sum_{j\in D\setminus\{i\}}\int_s^t\left(
    \int_{-\infty}^\infty p_j(x-l_i(u)-h_i(u)-y,   t-u)\psi_i(y, u)\rmd y
    \right)f_{ij}(u~|~s)\rmd u,\\
  &  t>s\geq0,\; x\in(-\infty, \infty),\;  i\in D.
\end{aligned}\end{equation}
Here $\psi_i=\psi_i(x, t)$ is the density function of the Gaussian random variable $w_i(t),\; t>0,\;i\in D,$
\[
\psi_i(x, t)=\frac{1}{\Sigma_i(t)\sqrt{2\pi}}\exp\left(
-\frac{x^2}{2\Sigma_i(t)^2}
\right),\qquad x\in(-\infty, \infty),
\]
where $\Sigma_i(t)^2:=\int_0^t\sigma_i(u)^2\rmd u,\;t>0,\; i\in D$.

In the Markov case of two-state processes, $d=2,$ with  constant parameters $c_i, h_i, \sigma_i,$ 
$i\in\{1, 2\},$
and with   exponentially distributed inter-switching times $T_n,$
the distributions of $X_1(t)$ and $X_2(t)$ have been analysed in detail in \cite{BJPS}.

Let us study the expectations 
\[
\mu_i(t):=\EE[X_i(t)],\qquad t\geq0,\;i\in D,
\]
 and 
\[
\mu_i(t~|~s):=\EE[X_i(t)~|~ N_i(s)=0],\qquad
 t\geq0,\;i\in D,
\]
of the $d$-state process $X=X(t),\;t>0.$

Note that $\EE[W^\sigma(t)]=0,\;\forall t\geq0$.
Moreover, for $t\in[0, s],$
$\mu_i(t~|~s)=l_i(t),\;i\in D$.

To characterise $\mu_i(t),\;\mu_i(t~|~s),\;i\in D,$
we use the following notations:
\[\begin{aligned}
a_i(t):=\int_0^t\left[
c_i(u)\overline F_{i}(u)+h_i(u)f_{i}(u)
\right]\rmd u,\qquad &t\geq0,
\\
a_i(t~|~s):=l_i(s)+\int_s^t[c_i(u)\overline F_{i}(u~|~s)+h_i(u)f_{i}(u~|~s)]\rmd u,\qquad &t\geq s\geq0,
\end{aligned}
\quad i\in D.
\]
Here  $f_i(u),\;  \overline F_i(u)$ and 
$f_i(u~|~s),\; \overline F_i(u~|~s),\; i\in D,$ are defined in \eqref{eq:barFibarFij}, \eqref{eq:fi} and \eqref{eq:barFibarFijs}.

\begin{proposition}\label{prop:mu}
The expectations $\mu_i(t),\;t\geq0,\;i\in D,$ 
satisfy the following Volterra system of integral equations\textup{,} 
\begin{equation}\label{eq:mu0mu1}
    \mu_i(t)=a_i(t)+\sum_{j\in D\setminus\{i\}}\int_0^t\mu_j(t-u)f_{ij}(u)\rmd u,
\qquad t\geq0,\;i\in D.
\end{equation}

If functions $\mu_i(t),\;t\geq0,\;i\in D,$ solve system \eqref{eq:mu0mu1}, 
then the conditional expectations $\mu_i(t~|~s)$ are given by
\begin{equation}\label{eq:mu0mu1s}
    \mu_i(t~|~s)=a_i(t~|~s)+\sum_{j\in D\setminus\{i\}}\int_s^t\mu_j(t-u)f_{ij}(u~|~s)\rmd u,
\qquad t\geq s,\;i\in D.
\end{equation}
\end{proposition}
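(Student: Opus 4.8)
The plan is to obtain \eqref{eq:mu0mu1} by taking expectations in the renewal identity \eqref{eq:distrX} and then to deduce \eqref{eq:mu0mu1s} by repeating the argument for the process observed from time $s$ on the event $\{N_i(s)=0\}$. First I would dispose of the Wiener part: since $B$ is independent of $\ep$ and each $w_i$ is a zero-mean $(\F_t,\PP)$-martingale, the term $w_i(t)\ind_{\tau_1>t}$ contributes $\EE[w_i(t)]\,\PP(\tau_1>t)=0$, while conditioning on $\{\tau_1=u\}$ gives $\EE[w_i(\tau_1)\ind_{\tau_1<t}]=\int_0^t\EE[w_i(u)]f_i(u)\,\rmd u=0$. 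Thus only the drift, jump and resetting terms survive. Taking $\EE$ in \eqref{eq:distrX} and conditioning on the first switch $\{\tau_1=u,\ \ep(\tau_1)=j\}$, whose sub-density is $f_{ij}(u)$ with $\sum_{j\neq i}f_{ij}=f_i$, the independence of the fresh copy $\tilde X$ of the process restarted at $\tau_1$ yields $\EE[\tilde X_{\ep_i(\tau_1)}(t-\tau_1)\mid \tau_1=u,\ \ep(\tau_1)=j]=\mu_j(t-u)$. Hence
\[
\mu_i(t)=l_i(t)\overline F_i(t)+\sum_{j\neq i}\int_0^t\big(l_i(u)+h_i(u)+\mu_j(t-u)\big)f_{ij}(u)\,\rmd u,
\]
which already exhibits the convolution term of \eqref{eq:mu0mu1}.

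It then remains to identify the inhomogeneous part with $a_i(t)$. Using $\sum_{j\neq i}f_{ij}=f_i$, the jump contribution collapses to $\int_0^t h_i(u)f_i(u)\,\rmd u$. For the drift contribution I would integrate by parts in $\int_0^t l_i(u)f_i(u)\,\rmd u$, writing $f_i=-\overline F_i'$ and using $l_i'=c_i$, $l_i(0)=0$, $\overline F_i(0)=1$; the boundary term then cancels $l_i(t)\overline F_i(t)$ and leaves precisely $\int_0^t c_i(u)\overline F_i(u)\,\rmd u$. Together these reproduce $a_i(t)=\int_0^t[c_i(u)\overline F_i(u)+h_i(u)f_i(u)]\,\rmd u$, establishing \eqref{eq:mu0mu1}.

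For the conditional statement I would run the same computation for the process seen from time $s$ on $\{\tau_1>s\}$, so that the no-switch term now carries $\overline F_i(t\mid s)$ and the first switch after $s$ has sub-density $f_{ij}(u\mid s)$ from \eqref{barFbarf}. Because the chain resets at that switch, the post-switch expectation is again the \emph{unconditional} $\mu_j(t-u)$, which is exactly where the hypothesis that the $\mu_i$ solve \eqref{eq:mu0mu1} enters. The only change in the integration by parts is the boundary term at $u=s$: since $\overline F_i(s\mid s)=1$ it produces $l_i(s)$, the deterministic drift accumulated on $[0,s]$, which is precisely the leading term of $a_i(t\mid s)$.

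The main obstacle is not the algebra but the careful justification of the conditioning: one must check that the fresh copy $\tilde X$ is genuinely independent of $(\tau_1,\ep(\tau_1))$ and of the pre-$\tau_1$ Gaussian increment, so that Fubini applies and the post-switch expectation is the unconditional $\mu_j(t-u)$ rather than a conditional one. Handling the Wiener term at the random time $\tau_1$ likewise rests on the independence of $B$ and $\ep$ (equivalently, optional sampling of the martingale $w_i$ at the independent time $\tau_1$). Once these measurability points are settled, the two identities follow from the elementary integration by parts indicated above.
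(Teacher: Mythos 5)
Your proof is correct and takes essentially the same route as the paper's: the paper applies the density renewal equation \eqref{eq:p0p1} to $\mu_i(t)=\int_{-\infty}^\infty x\,p_i(x,t)\,\rmd x$, which amounts to exactly your step of taking expectations in \eqref{eq:distrX} (the Gaussian term dropping out there via $\int y\,\psi_i(y,u)\,\rmd y=0$, the density analogue of your independence argument), and it arrives at the identical intermediate identity $\mu_i(t)=l_i(t)\overline F_i(t)+\sum_{j\in D\setminus\{i\}}\int_0^t\left[\mu_j(t-u)+l_i(u)+h_i(u)\right]f_{ij}(u)\,\rmd u$ before the same integration by parts (the paper does it term by term with $\overline F_{ij}$, you collapse $\sum_{j\neq i}f_{ij}=f_i$ first and use $\overline F_i$ --- an immaterial difference). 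Your explicit handling of the boundary term $l_i(s)$ and of where the hypothesis that the $\mu_j$ solve \eqref{eq:mu0mu1} enters merely spells out what the paper compresses into ``the proof of \eqref{eq:mu0mu1s} is similar.''
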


\begin{proof} 
By applying \eqref{eq:p0p1} (with $s=0$)
to
\[
\mu_i(t)=\int_{-\infty}^\infty xp_i(x, t)\rmd x
\]
one can get 
\[\begin{aligned}
\mu_i(t)=&l_i(t)\overline F_i(t)+\sum_{j\in D\setminus\{i\}}
\int_0^t\left(
\int_{-\infty}^\infty\psi_i(y, u)\rmd y\int_{-\infty}^\infty xp_j(x-l_i(u)-h_i(u)-y, t-u)\rmd x
\right)f_{ij}(u)\rmd u\\
=&l_i(t)\overline F_i(t)+\sum_{j\in D\setminus\{i\}}\int_0^t
\left[
\mu_j(t-u)+l_i(u)+h_i(u)
\right]f_{ij}(u)\rmd u.
\end{aligned}\]
Integrating by parts we have 
\[
\mu_i(t)=l_i(t)\overline F_i(t)+\sum_{j\in D\setminus\{i\}}\left[
\int_0^t\left(\mu_j(t-u)+h_i(u)\right)f_{ij}(u)\rmd u
-l_i(t)\overline F_{ij}(t)+\int_0^tc_i(u)\overline F_{ij}(u)\rmd u
\right],
\]
which gives \eqref{eq:mu0mu1}. The proof of \eqref{eq:mu0mu1s} is similar.\end{proof}

In the case $d=2$
equations \eqref{eq:mu0mu1} are derived e.g. in equations (3.2)-(3.3) of \cite{STAPRO13}.

\begin{corollary}\label{corollary}
The identities 
\[
\mu_i(t)\equiv0,\;t\geq0,\text{~~~ and~~~}
\mu_i(t~|~s)\equiv l_i(s),\; t\geq s,\qquad i\in D,
\]
hold 
if  and only if
\begin{equation}\label{eq:DoobMeyer}
\gamma_i(t)h_i(t)+ {c_i(t)} \equiv0,
\qquad t\geq0,\;i\in D,
\end{equation} 
where $\gamma_i=\gamma_i(t),\; t\geq0,\;i\in D,$ are the   hazard rate functions\textup{,}
which  are defined by \eqref{def:hazard}, \eqref{def:hazardS}.
\end{corollary}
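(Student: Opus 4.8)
The plan is to read both identities off the Volterra system of Proposition~\ref{prop:mu} and to reduce everything to the single scalar quantity $a_i$. The first step is to rewrite $a_i$ so that the left-hand side of \eqref{eq:DoobMeyer} appears explicitly. Using $f_i(u)=\gamma_i(u)\overline F_i(u)$, which is exactly \eqref{eq:fi} combined with \eqref{eq:barFibarFij}, I would merge the two terms in the integrand of $a_i$ to get
\begin{equation*}
a_i(t)=\int_0^t\bigl[c_i(u)+\gamma_i(u)h_i(u)\bigr]\,\overline F_i(u)\,\rmd u,\qquad t\geq0,\;i\in D,
\end{equation*}
and likewise, using $f_i(u\mid s)=\gamma_i(u)\overline F_i(u\mid s)$ from \eqref{eq:barFibarFijs},
\begin{equation*}
a_i(t\mid s)=l_i(s)+\int_s^t\bigl[c_i(u)+\gamma_i(u)h_i(u)\bigr]\,\overline F_i(u\mid s)\,\rmd u.
\end{equation*}
This is the computational heart of the argument; the remainder is bookkeeping on the integral equations.

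For the necessity (\emph{only if}) direction, I would assume $\mu_i\equiv0$ for all $i\in D$ and substitute into \eqref{eq:mu0mu1}. The convolution sum then vanishes identically and the system collapses to $a_i(t)\equiv0$, $t\geq0$, $i\in D$. Since $a_i$ is absolutely continuous (an integral of a locally integrable function), I can differentiate to obtain $\bigl[c_i(t)+\gamma_i(t)h_i(t)\bigr]\overline F_i(t)=0$ for a.e.\ $t$; because $\overline F_i(t)>0$ by \eqref{eq:barFibarFij}, condition \eqref{eq:DoobMeyer} follows.

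For the sufficiency (\emph{if}) direction, I would assume \eqref{eq:DoobMeyer}, so that the rewritten formulas give $a_i(t)\equiv0$ and $a_i(t\mid s)\equiv l_i(s)$. Then the constant functions $\mu_i\equiv0$ solve \eqref{eq:mu0mu1}, and substituting them into \eqref{eq:mu0mu1s} yields $\mu_i(t\mid s)=a_i(t\mid s)=l_i(s)$. To conclude that $\mu_i\equiv0$ really is the expectation, and not merely one solution among many, I would invoke uniqueness of the solution of the linear Volterra system of the second kind \eqref{eq:mu0mu1}, which holds because the kernels $f_{ij}$ are locally integrable (standard Picard iteration together with a Gronwall estimate).

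I expect this uniqueness step to be the only genuine obstacle: it is precisely what upgrades ``$0$ is a solution'' to ``$\mu_i\equiv0$'', and it should be stated with care since $X$ itself need not be Markov. The differentiation in the necessity direction must be read in the a.e.\ sense, consistent with the local integrability assumed on $c_i$ and on the product $\gamma_i h_i$; interpreting \eqref{eq:DoobMeyer} as an a.e.\ identity is what makes the two directions align.
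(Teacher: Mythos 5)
Your proposal is correct and follows essentially the same route as the paper: both reduce the identities to $a_i(t)\equiv0$ and $a_i(t\,|\,s)\equiv l_i(s)$ via the Volterra system of Proposition~\ref{prop:mu}, and then use $f_i(u)=\gamma_i(u)\overline F_i(u)$ (and its conditional analogue) together with $\overline F_i>0$ to identify these with condition \eqref{eq:DoobMeyer}. Your explicit appeal to uniqueness of the solution of the linear Volterra system in the sufficiency direction --- upgrading ``$\mu_i\equiv0$ is a solution'' to ``$\mu_i\equiv0$ is the expectation'' --- is left implicit in the paper's proof and is a careful, correct addition rather than a deviation.
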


\begin{proof}
Notice that systems \eqref{eq:mu0mu1} and \eqref{eq:mu0mu1s} have  the trivial solutions
$\mu_i(t)\equiv0,$ $t\geq0,\;i\in D$ and
$\mu_i(t~|~s)\equiv l_i(s),\;t\geq s,\;i\in D,$ respectively, if and only if
$a_i(t)\equiv0$ and $a_i(t~|~s)\equiv l_0(s),\; i\in D$. 
These equalities hold, when
\begin{equation*}
\label{eq:DM}\begin{aligned}
c_i(u)\overline F_i(u)+h_i(u)f_i(u)\equiv0,\\
c_i(u)\overline F_i(u~|~s)+h_i(u)f_i(u~|~s)\equiv0,
\end{aligned}\qquad u>s>0,\;  i\in D,
\end{equation*}
which is equivalent to \eqref{eq:DoobMeyer}, due to  \eqref{def:hazard}-\eqref{barFf} and \eqref{barFbarf}.
\end{proof}
 
\begin{remark}
Condition \eqref{eq:DoobMeyer} has the sense of Doob-Meyer decomposition.
These type of conditions for the jump-telegraph processes appears first in 
\textup{\cite[Theorem 1]{R2007} }
in the case of constant deterministic parameters $c, h, \gamma$
\textup{(} see also \textup{\cite{KR}).} In this case condition \eqref{eq:DoobMeyer}
is intuitively evident. It means that the displacement performed by the telegraph process
during a time-period $\tau$ equal to the mean-switching-time
 is identical to the jump's size performed in the opposite direction. 

This intuitively explains why this is a martingale condition. 
\end{remark}

\section{Girsanov's transformation}\label{sec:3}
\setcounter{equation}{0}
In this section we analyse the problems which are important for applications, e.g. for financial modelling.  
First, we describe all possible martingales in our setting. Then, we derive a generalisation of
Girsanov's Theorem. 

\subsection{Martingale's characterisation}

Since the diffusion part $W^\sigma=W^\sigma(t)=\int_0^t\sigma^\dagger(u)\rmd B(u)$ 
 is already a $\PP$-martingale, 
 it is sufficient to investigate the process
 $X=T^c(t)+N^h(t),\;t\geq0.$

\begin{theorem}\label{theo:DoobMeyer}
Let $X=X(t),\;t\geq0,$ 
be the  process 
with  the parameters
$\langle c_i, h_i\rangle,\;i\in D,$  switching at random times $\tau_n,\;n\geq0.$ 
Let the inter-switching times 
$T_n=\tau_n-\tau_{n-1},\;n\geq1,$ be distributed with    
hazard rate functions $\gamma_i=\gamma_i(t),\;t\geq0,\; i\in D,$ see \eqref{def:hazard}, \eqref{def:hazardS}.

Process $X$ is a martingale if and only if the equalities in  \eqref{eq:DoobMeyer} are fulfilled\textup{.}
\end{theorem}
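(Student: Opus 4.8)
The plan is to exhibit the Doob--Meyer decomposition of $X=T^c+N^h$ explicitly and to read off the martingale property from the vanishing of its predictable finite-variation part. Since $W^\sigma$ is already a $(\mathcal F_t,\PP)$-martingale, as noted just before the theorem, it suffices to treat $X=T^c+N^h$.

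First I would identify the compensator of the jump part $N^h$. From the hazard-rate description \eqref{barFf}, the counting process $N$ admits, while in state $\ep(\tau_{N(t)})$ with elapsed time $t-\tau_{N(t)}$, the stochastic intensity $\gamma^\dagger(t):=\gamma_{\ep(\tau_{N(t)})}(t-\tau_{N(t)})$, formed from the $\gamma_i$ by the switching rule \eqref{def:transform}. Consequently
\[
M(t):=N^h(t)-\int_0^t h^\dagger(u)\gamma^\dagger(u)\,\rmd u
\]
is an $(\mathcal F_t,\PP)$-martingale, being the compensated integral of $h^\dagger$ against $N$. Combining this with the path-by-path integral $T^c(t)=\int_0^t c^\dagger(u)\,\rmd u$ gives
\[
X(t)=M(t)+\int_0^t\big(c^\dagger(u)+h^\dagger(u)\gamma^\dagger(u)\big)\,\rmd u,
\]
which is a genuine Doob--Meyer decomposition: $M$ is a martingale and the remaining term is continuous and of finite variation.

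By uniqueness of this decomposition, $X$ is a martingale if and only if the finite-variation part vanishes identically, i.e. $c^\dagger(u)+h^\dagger(u)\gamma^\dagger(u)=0$ for almost every $u$, almost surely. Unwinding the switching transform \eqref{def:transform}, the integrand equals $c_i(v)+h_i(v)\gamma_i(v)$ with $i=\ep(\tau_{N(u)})$ and $v=u-\tau_{N(u)}$; since every state $i\in D$ is visited with arbitrarily small elapsed times on a set of positive probability, the almost-everywhere identity holds if and only if $c_i(v)+\gamma_i(v)h_i(v)\equiv0$ for all $i\in D$ and $v\geq0$, which is precisely \eqref{eq:DoobMeyer}. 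For necessity one may alternatively bypass the decomposition altogether: a martingale has constant mean, so $\mu_i(t)=\EE[X_i(t)]=X_i(0)=0$, and Corollary \ref{corollary} then yields \eqref{eq:DoobMeyer} directly.

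I expect the main obstacle to be the rigorous identification of the compensator $\int_0^t h^\dagger\gamma^\dagger\,\rmd u$ of $N^h$, that is, verifying that $\gamma^\dagger$ is the genuine $\mathcal F_t$-predictable intensity of the semi-Markov point process $N$. This is where the renewal assumption on the $T_n$ and the non-exploding condition \eqref{def:blowup} enter: one must check that between consecutive switchings the intensity is the deterministic hazard rate of the elapsed time, and that the process is non-explosive, so that $M$ is a true martingale rather than merely a local one. If one prefers to avoid general point-process theory, the same conclusion can be reached by conditioning on $\mathcal F_s$, using the renewal/semi-Markov property to reduce $\EE[X(t)-X(s)\,|\,\mathcal F_s]$ to a conditional expectation of the form $\mu_i(t\,|\,s')-l_i(s')$, and invoking the conditional statement of Corollary \ref{corollary}.
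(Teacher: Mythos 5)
Your proof is correct, and its core coincides with the paper's: the sufficiency direction is exactly the argument in the text, which identifies $T^{\gamma h}$ (your $\int_0^t h^\dagger(u)\gamma^\dagger(u)\,\rmd u$) as the compensator of $N^h$ by citing Proposition 2.13 of \cite{MM}, so that under \eqref{eq:DoobMeyer} the compensator equals $-T^c$ and $X=T^c+N^h$ is a martingale. Where you differ is the necessity direction: the paper simply notes that a martingale has $\mu_i(t)\equiv0$ and invokes Proposition \ref{prop:mu} and Corollary \ref{corollary} --- the route you offer only as a fallback at the end --- whereas your primary route goes through uniqueness of the Doob--Meyer (special semimartingale) decomposition plus a support argument for the occupation law of the pair (state, elapsed time). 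That route is legitimate and arguably more structural, but it costs two verifications the paper's route avoids: (i) that $M$ is a true martingale rather than a local one, i.e.\ integrability of the compensator, which you correctly flag and which the citation of \cite{MM} absorbs in the paper; and (ii) the unwinding step, where your phrase ``every state is visited with arbitrarily small elapsed times'' is not quite what you need --- small elapsed times would only give \eqref{eq:DoobMeyer} near $v=0$. What you need is elapsed times of every magnitude in every state, which does hold here because the standing assumption $\overline F_{ij}(t)>0$ for all $t>0$ makes every state reachable and gives sojourn times with full support on $[0,\infty)$; even then the conclusion is $c_i(v)+\gamma_i(v)h_i(v)=0$ for a.e.\ $v$ rather than for all $v$, which is the form in which \eqref{eq:DoobMeyer} is meaningfully used throughout the paper anyway. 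In short: the expectation-based route buys necessity with no integrability or support bookkeeping, at the price of having already established Proposition \ref{prop:mu}; your decomposition route is self-contained at the pathwise level but should state the full-support fact and the a.e.\ qualification explicitly.
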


\begin{proof}
If $X$ is a martingale, then $\mu_0(t)=\mu_1(t)\equiv0$, which is equivalent to \eqref{eq:DoobMeyer},
see Proposition \ref{prop:mu}. 

Conversely, it is known, see \cite{MM}, Proposition 2.13, 
that the compensated jump process
$N^h-T^{\gamma h}$ is a martingale. Therefore,
if identities \eqref{eq:DoobMeyer} hold, then $-T^c$ is the compensator of $N^h$
and the sum $T^c+N^h$ is a martingale. 
\end{proof}

Notice that if jumps  vanish\textup{,} $h_i\equiv0,\;i\in D,$ 
 process $X$ is a martingale 
only in the trivial case\textup{:}  
$c_i\equiv0,\;i\in D,$ and thus $X=0,$ see  \eqref{eq:DoobMeyer}.

\begin{corollary}[\cite{BJPS}]\label{cor:const}
Let $X=X(t),\;t\geq0,$ be the jump-diffusion  process with switching \emph{constant} parameters 
$c_i, h_i$ and $\sigma_i,\;i\in D$. Let $h_i\neq0,\;i\in D$.

Process $X$ is a martingale if and only if  
$
c_i/h_i<0,\; i\in D,
$
and the 
distributions of the inter-switching times $T_n$ are 
\emph{exponential,} $\mathrm{Exp}(\lambda_i),$ with  parameters 
$\lambda_i,\; \lambda_i=-c_i/h_i>0,\;$
$i\in D$. In this case the underlying   $\ep$ is a Markov process.
\end{corollary}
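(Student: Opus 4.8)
The plan is to read the claim off directly from Theorem~\ref{theo:DoobMeyer}, specialising the Doob--Meyer condition \eqref{eq:DoobMeyer} to constant $c_i,h_i$ and then translating the resulting statement about hazard rates into a statement about the law of $T_n$. Since the diffusion part $W^\sigma$ is already a $\PP$-martingale, it suffices to deal with $T^c+N^h$, so Theorem~\ref{theo:DoobMeyer} applies verbatim: $X$ is a $\PP$-martingale if and only if $\gamma_i(t)h_i+c_i\equiv0$ for all $t\ge0$ and $i\in D$. Everything else is extracting information from this single scalar identity per state.

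For the forward direction I would assume $X$ is a martingale and, using $h_i\neq0$, solve \eqref{eq:DoobMeyer} for the hazard rate to get $\gamma_i(t)\equiv-c_i/h_i=:\lambda_i$, a constant independent of $t$. The sign of $\lambda_i$ is then pinned down by two facts: $\gamma_i$ is a hazard rate, hence nonnegative, and the non-exploding condition \eqref{def:blowup} requires $\int_0^\infty\gamma_i(u)\,\rmd u=+\infty$, which for a constant $\gamma_i\equiv\lambda_i$ is possible only when $\lambda_i>0$. Thus $\lambda_i=-c_i/h_i>0$, i.e. $c_i/h_i<0$. Feeding the constant $\gamma_i\equiv\lambda_i$ into \eqref{eq:barFibarFij} gives $\overline F_i(t)=\exp(-\lambda_i t)$, which is precisely the survival function of $\mathrm{Exp}(\lambda_i)$; so, conditionally on the pre-jump state being $i$, the inter-switching time $T_n$ is $\mathrm{Exp}(\lambda_i)$-distributed.

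The converse is a one-line verification: if $T_n\sim\mathrm{Exp}(\lambda_i)$ with $\lambda_i=-c_i/h_i>0$, then by \eqref{eq:barFibarFij}--\eqref{eq:fi} the hazard rate is the constant $\gamma_i(t)\equiv\lambda_i$, whence $\gamma_i(t)h_i+c_i=\lambda_i h_i+c_i=-c_i+c_i=0$, so \eqref{eq:DoobMeyer} holds and Theorem~\ref{theo:DoobMeyer} yields that $X$ is a martingale. For the assertion that $\ep$ is Markov I would invoke the memoryless property of the exponential sojourn times: once the holding time in each state is exponential, the conditional law of the future given the present is independent of the elapsed sojourn time, so the renewal semi-Markov process $\ep$ reduces to a time-homogeneous Markov chain.

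The argument is short precisely because Theorem~\ref{theo:DoobMeyer} does the heavy lifting, and I expect the only delicate points to be (i) using nonnegativity together with \eqref{def:blowup} to upgrade $\lambda_i\neq0$ to $\lambda_i>0$, and (ii) the passage from ``constant total rate $\gamma_i$'' to the Markov property of $\ep$, which is the main thing to handle with care. Constancy of $\gamma_i=\sum_{j\neq i}\gamma_{ij}$ guarantees exponential sojourn times and hence memorylessness; for $d=2$ this already forces the single competing transition rate to be constant, so $\ep$ is genuinely a two-state continuous-time Markov chain, whereas for $d\ge3$ one should additionally note that the embedded transition probabilities $\gamma_{ij}/\gamma_i$ must be time-independent to conclude full time-homogeneity.
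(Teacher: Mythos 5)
Your proof is correct and follows exactly the route the paper intends: the corollary is stated without a written proof as an immediate specialisation of Theorem~\ref{theo:DoobMeyer}, and your argument---solving \eqref{eq:DoobMeyer} for the hazard rate with constant $c_i$, $h_i\neq0$ to get $\gamma_i(t)\equiv-c_i/h_i=\lambda_i$, using nonnegativity together with the non-exploding condition \eqref{def:blowup} to conclude $\lambda_i>0$, reading off the exponential law from \eqref{eq:barFibarFij}, and verifying the converse by substitution---is precisely that specialisation. Your closing caveat is a legitimate refinement that the paper's bare statement glosses over: condition \eqref{eq:DoobMeyer} constrains only the total rate $\gamma_i=\sum_{j\in D\setminus\{i\}}\gamma_{ij}$, so for $d\geq3$ the Markov property of $\ep$ does require the additional observation that the embedded transition probabilities $\gamma_{ij}/\gamma_i$ be time-independent, whereas for $d=2$ (the setting of the cited source \cite{BJPS}) constancy of $\gamma_i$ already forces constancy of the single competing rate and the claim is automatic.
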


\begin{remark}
In the paper  by  Di Crescenzo et al  \textup{\cite{DiC14}}
the generalised $2$-state geometric  telegraph-diffusion process $S=S(t)$ 
with constant parameters $c_0,$ $c_1$ and $\sigma$ is studied\textup{,}
\[
S(t)=s_0\exp\left[
T(t)+\sigma B(t)
\right], 
\] 
where $B$ is a standard Brownian motion and the inter-switching times are independent and arbitrarily distributed.
The authors expected that  the process
$S=S(t)$ can be transformed in a martingale by superimposing of a jump component. 
This expectation is not justified.

The process $S(t)/s_0$  is the stochastic exponential  
of $X=T(t)+\sigma B(t)+\sigma^2t/2.$ 
After the inclusion of a jump component with \emph{constant}
jump amplitudes $h_1, h_2>-1$\textup{,} such that $\dfrac{c_i+\sigma^2/2}{h_i}<0,$ $i\in D=\{1, 2\},$
 processes $X$ and $S$ become    martingales only in the standard case of exponentially 
distributed inter-arrival times, $\mathrm{Exp}(\lambda_i),$ with constant intensities  
\[
\lambda_i=-\frac{c_i+\sigma^2/2}{h_i},\quad i\in D=\{1, 2\},
\]
\textup{(}see Corollary \textup{\ref{cor:const}).}
\end{remark}


\subsection{Girsanov's Theorem}
The problem of existence and uniqueness  of an equivalent martingale measure 
is extremely significant for applications, especially in the  theory of financial derivatives.
It is important to 
understand how the equivalent martingale measures can be constructed
 if such a measure exists.

Let $\ep(t)\in D,\;t\geq0,$ be the switching process 
on the filtered probability space $(\Omega,   \mathcal F, \mathcal F_t, \PP)$
governed by  the   hazard rate functions 
$\gamma_i=\gamma_i^\PP(t)=f_i(t)/\overline F_i(t),\;t>0, i\in D,$ of the  inter-switching times, 
see  \eqref{def:hazardS}, \eqref{def:hazard},
satisfying the non-exploding condition \eqref{def:blowup}.

Let $c_i^*$ and  $h_i^*,\;i\in D,$ be measurable functions 
satisfying the martingale condition \eqref{eq:DoobMeyer},  
\begin{equation}
\label{eq:DoobMeyer*}
\gamma^\PP_i(t)h_i^*(t)+c_i^*(t)\equiv0\qquad  t\geq0,\; i\in D.
\end{equation} 
We assume 
$c_i^*,\;i\in D,$ to be  locally integrable and $h_i^*(t)>-1,\;t\geq0,\;i\in D.$  Thus,
\begin{equation}
\label{eq:c*gamma}
c_i^*(t)\leq\gamma_i^\PP(t),\qquad \forall t>0,\;i\in D.
\end{equation}
Furthermore, let
\begin{equation}
\label{eq:c*g}
\int_0^\infty\left(
c_i^*(u)-\gamma_i^\PP(u)
\right)\rmd u=-\infty,\qquad i\in D.
\end{equation}

Consider the  jump-diffusion martingale $X^*=T^{*}(t)+N^{*}(t)+W^*(t)$ 
with regime switching,
where
\begin{equation}
\label{def:JTDsum*}
\begin{aligned}
T^*(t)=&\sum_{n=1}^{N(t)}l_{\ep(\tau_{n-1})}^*(T_n)+l_{\ep(\tau_{N(t)})}^*(t-\tau_{N(t)}),\\
N^*(t)=&\sum_{n=1}^{N(t)}h_{\ep(\tau_{n-1})}^*(T_n),\\
W^*(t)=&\sum_{n=1}^{N(t)}w_{\ep(\tau_{n-1})}^*(T_n)+w_{\ep(\tau_{N(t)})}^*(t-\tau_{N(t)}).
\end{aligned}\end{equation}
Here, see \eqref{def:liwi},
\[
l_i^*(t)=\int_0^tc_i^*(u)\rmd u,\qquad w_i^*(t)=\int_0^t\sigma_i^*(u)\rmd B(u), 
\]
where $\sigma_i^*,\; i\in D,$ are locally square integrable functions.

Let $Z=Z(t)=\mathcal E_t(X^*)$ be the stochastic exponential of $X^*$. By  \eqref{eq:stochexp}
\begin{equation}
\label{def:RN1}
\begin{aligned}
Z(t)=\mathcal E_t(X^*)
=&\exp\left(T^{c^*-{\sigma^*}^2/2}(t)+W^{\sigma^*}(t)\right)
\prod_{n=1}^{N_t}(1+h^*_{\ep(\tau_{n-1})}(T_n))\\
=&\exp\left(T^{c^*-{\sigma^*}^2/2}(t)+N^{\ln(1+h^*)}+{W^\sigma}^*(t)\right).
\end{aligned}
\end{equation}
Here 
$
T^{c^*-{\sigma^*}^2/2}
$
is the $d$-state generalised telegraph process 
\eqref{def:Tsum} with the velocity regimes $c_i^*-(\sigma_i^*)^2/2$ instead of $c_i,$  and 
$N^{\ln(1+h^*)}$ is the pure jump process \eqref{def:Jsum}
with the  jump values $\ln(1+h_i^*)$ instead of $h_i,$ $i\in D.$

\begin{theorem}[Girsanov's Theorem]\label{theo:Girsanov}
Assume that conditions \eqref{eq:DoobMeyer*}-\eqref{eq:c*g} hold.

Let measure $\QQ$ be equivalent 
 to $\PP$ under the fixed time horizon $t,\; t\geq0,$
 with the density 
\begin{equation}
\label{def:RN2}
\frac{\rmd \QQ}{\rmd\PP}|_{\mathcal F_t}=Z(t). 
\end{equation}

Under the measure $\QQ:$
\begin{list}{\textup{(\alph{lis})}}
{\usecounter{lis}}
\item\label{first}
 the inter-arrival times $\{T_n=\tau_n-\tau_{n-1}\}_{n\geq1}$ 
are independent and distributed with the   survival functions 
\begin{equation}\label{eq:SF*}
\overline F_i^\QQ(t)=\exp(l_i^*(t))\overline F_i^\PP(t),\qquad t\geq0,\; i\in D.
\end{equation}
The hazard rate functions $\gamma_i^\QQ$ of these distributions are given by
\begin{equation}
\label{eq:HRF*}
\gamma_i^\QQ(t)=\gamma_i^\PP(t)-c_i^*(t)
\equiv(1+h_i^*(t))\gamma_i^\PP(t),\qquad t\geq0,\; i\in D,
\end{equation}
and  the non-exploding condition
\begin{equation}
\label{eq:blowupQ}
\int_0^\infty\gamma_i^\QQ(u)\rmd u=+\infty,\qquad i\in D,
\end{equation}
holds\textup{;}

\item\label{second}
the process $\widetilde B(t)=B(t)-L^*(t)$ is the standard $\QQ$-Brownian motion\textup{,}
where $L^*(t),$  $t\geq0,$ is the generalised  telegraph process with switching velocities 
$\sigma_i^*,\;i\in D,$ i.e.
\[
L^*(t):=T^{\sigma^*}(t)=\sum_{n=1}^{N(t)}\int_0^{T_n}\sigma_{\ep(\tau_{n-1})}^*(u)\rmd u
+\int_0^{t-\tau_{N(t)}}\sigma_{\ep(\tau_{N(t)})}^*(u)\rmd u;
\]

\item\label{third}
 the  $\PP$-jump-diffusion process $X$ with switching regimes\textup{,}
 which is defined by \eqref{def:JTD}\textup{,}
 \[X=T^c(t)+N^h(t)+W^\sigma(t) ,\;t\geq0,\] 
under measure $\QQ$  is still a jump-diffusion process  
  \[X=T^{c+\sigma\sigma^*}(t)+N^h(t)+\widetilde W^\sigma(t),\;t\geq0,\]
characterised by $\langle c+\sigma\sigma^*, h, \sigma\rangle,$ 
with the hazard rate functions $\gamma_i^\QQ$ 
of the inter-switching times $\{T_n\}_{n\geq1},$ determined by \eqref{eq:HRF*}.
Here $\widetilde W^\sigma$ is the stochastic integral
\eqref{def:Dsum} 
based on the $\QQ$-Brownian motion  
$\widetilde B.$ 
\end{list}
\end{theorem}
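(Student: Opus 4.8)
The plan is to establish the three assertions (a)--(c) in turn, exploiting the explicit product form \eqref{def:RN1} of the density $Z=\mathcal E_t(X^*)$. As a preliminary step I would note that, since $c_i^*,h_i^*$ satisfy the martingale condition \eqref{eq:DoobMeyer*}, the process $X^*$ is a $\PP$-martingale by Theorem \ref{theo:DoobMeyer}; consequently $Z=\mathcal E_t(X^*)$ is a strictly positive $\PP$-local martingale (positivity uses $h_i^*>-1$), and the local integrability of $c_i^*$ together with \eqref{eq:c*gamma}--\eqref{eq:c*g} guarantees that $Z$ is a true martingale with $\EE_\PP[Z(t)]=1$, so that $\QQ$ is a well-defined probability measure equivalent to $\PP$ on $\mathcal F_t$.

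For assertion (a) I would compute the $\QQ$-law of the first switching time directly from the density. On $\{\tau_1>t,\ \ep(0)=i\}$ there is no switch, the jump product in \eqref{def:RN1} is empty, and the trajectory stays in state $i$, so $Z(t)=\exp\!\big(l_i^*(t)-\tfrac12\int_0^t\sigma_i^*(u)^2\rmd u+w_i^*(t)\big)$. Since $B$ is independent of $\ep$, I can split the resulting expectation and use the exponential-martingale normalisation $\EE_\PP[\exp(-\tfrac12\int_0^t\sigma_i^*(u)^2\rmd u+w_i^*(t))]=1$ to obtain
\begin{equation*}
\QQ(\tau_1>t\mid\ep(0)=i)=\EE_\PP\big[Z(t)\ind_{\tau_1>t}\mid\ep(0)=i\big]=\exp(l_i^*(t))\,\overline F_i^\PP(t),
\end{equation*}
which is \eqref{eq:SF*}. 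Differentiating $\overline F_i^\QQ$ gives $\gamma_i^\QQ=-\rmd\ln\overline F_i^\QQ/\rmd t=\gamma_i^\PP-c_i^*$, and substituting $c_i^*=-\gamma_i^\PP h_i^*$ from \eqref{eq:DoobMeyer*} yields the second form in \eqref{eq:HRF*}; the non-exploding condition \eqref{eq:blowupQ} is then exactly \eqref{eq:c*g} rewritten. To promote this marginal computation to the full independence statement for $\{T_n\}_{n\ge1}$, I would iterate it: the product in \eqref{def:RN1} factorises $Z$ into one contribution per inter-switching interval, and successive conditioning on $\mathcal F_{\tau_{n}}$ together with the $\PP$-renewal property shows that the joint $\QQ$-density splits into the product of marginals of the form \eqref{eq:SF*}.

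Assertion (b) is the diffusion part of Girsanov's theorem. Since $\widetilde B=B-L^*$ is continuous with $[\widetilde B]_t=[B]_t=t$, by L\'evy's characterisation it is enough to show that $\widetilde B$ is a $\QQ$-local martingale. By the general change-of-measure rule for $Z=\mathcal E_t(X^*)$ (see \cite{JYC}), $B-\langle B,X^*\rangle$ is a $\QQ$-local martingale; as the continuous martingale part of $X^*$ is $W^{\sigma^*}$ and $B$ is continuous, $\langle B,X^*\rangle_t=\langle B,W^{\sigma^*}\rangle_t=T^{\sigma^*}(t)=L^*(t)$, so $\widetilde B$ is a standard $\QQ$-Brownian motion. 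Assertion (c) then follows by substitution: writing $\rmd B=\rmd\widetilde B+(\sigma^*)^\dagger\rmd t$ in $W^\sigma=\int_0^t\sigma^\dagger\rmd B$ gives $W^\sigma=\widetilde W^\sigma+T^{\sigma\sigma^*}$, whence $X=T^c+N^h+W^\sigma=T^{c+\sigma\sigma^*}+N^h+\widetilde W^\sigma$; combined with the $\QQ$-hazard rates $\gamma_i^\QQ$ from (a), this is precisely the triplet $\langle c+\sigma\sigma^*,h,\sigma\rangle$.

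I expect the main obstacle to lie in assertion (a): while the marginal law of $\tau_1$ is immediate, upgrading it to genuine independence of the whole family $\{T_n\}_{n\ge1}$ under $\QQ$ requires carefully disentangling the Gaussian and the jump--telegraph factors of $Z$ across successive intervals and checking that, on each interval, the Gaussian contribution integrates to $1$ without coupling the switching times. The accompanying verification that $Z$ is a true $\PP$-martingale --- which is where \eqref{eq:c*gamma}--\eqref{eq:c*g} and the $\QQ$-non-explosion \eqref{eq:blowupQ} play their role in keeping $\QQ$ a probability measure --- is the other point that needs care.
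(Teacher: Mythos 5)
Your proposal is correct and follows essentially the same route as the paper: the marginal computation $\overline F_i^\QQ(t)=\EE_\PP\bigl[Z(t)\ind_{\{\tau_1>t\}}\mid\ep(0)=i\bigr]=\exp(l_i^*(t))\overline F_i^\PP(t)$ via the empty jump product and the Gaussian normalisation, then $\gamma_i^\QQ=\gamma_i^\PP-c_i^*$ with \eqref{eq:DoobMeyer*} and \eqref{eq:c*g} giving \eqref{eq:HRF*}--\eqref{eq:blowupQ}, the classical Girsanov/L\'evy argument for (b) (which the paper simply cites from \cite{JYC}, Proposition 1.7.3.1), and the substitution $W^\sigma=\widetilde W^\sigma+T^{\sigma\sigma^*}$ for (c). The extra details you flag --- verifying that $Z$ is a true $\PP$-martingale and iterating the marginal computation across $\mathcal F_{\tau_n}$ to get independence of all the $T_n$ --- are points the paper leaves implicit, and your sketches of them are sound.
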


\begin{proof}
By definition, see \eqref{def:RN1}-\eqref{def:RN2}, we have 
\begin{equation*}
\label{eq:QbarF}
\begin{aligned}
\overline F_i^\QQ(t)=&\QQ\{\tau_1>t~|~\ep(0)=i\}
=\EE_\PP\left\{Z(t)\ind_{\{\tau_1>t\}}~|~\ep(0)=i\right\}\\
=&\exp\left(\int_0^t[c_i^*(u)-(\sigma_i^*)^2(u)/2]\rmd u\right)
\EE\left[\exp({w_i^*(t)})\right]\PP(\tau_1>t~|~\ep(0)=i).
\end{aligned}\end{equation*}
Owing to
$
\EE\left[
\exp({w_i^*(t)})
\right]=\exp\left(\frac12
\int_0^t(\sigma_i^*)^2(u)\rmd u
\right)
$
we obtain  \eqref{eq:SF*}.   

Note that by \eqref{eq:SF*} and \eqref{eq:c*g}
\begin{equation}\label{eq:gqgp}
\exp\left(
-\int_0^t\gamma_i^\QQ(u)\rmd u
\right)=\overline F_i^\QQ(t)=\exp(l_i^*(t))\overline F_i^\PP(t)=\exp\left(
\int_0^t\left(c_i^*(u)-\gamma_i^\PP(u)\right)\rmd u
\right), 
\end{equation}
\[
\quad t\geq0,\; i\in D.
\]
Hence,
$\gamma_i^\QQ\equiv\gamma_i^\PP-c_i^*$.
Since by \eqref{eq:DoobMeyer*}
$c_i^*=-h_i^*\gamma_i^\PP,$ and 
 \eqref{eq:HRF*} is completely proved.
The non-exploding condition \eqref{eq:blowupQ}
follows from \eqref{eq:c*g}.

The part (b) of the theorem follows from the  classical Girsanov's Theorem, see e.g. 
\cite{JYC}, Proposition 1.7.3.1. 

The part (c)  holds by the following observation. 
The Wiener part $W^\sigma$ of   
process
$X$ under measure $\QQ$ becomes $W^\sigma(t)=\widetilde W^\sigma(t)+T^{\sigma\sigma^*}(t)$,
see part (b).
Here $\widetilde W^\sigma$ is the $\QQ$-Wiener process, 
i.e. the It\^o integral w.r.t. $\QQ$-Brownian motion $\widetilde B$,
and $T^{\sigma\sigma^*}$ is
the $\QQ$-telegraph process which is driven 
by the subsequently switching velocities $\sigma_i(t)\sigma_i^*(t),\; i\in D$.

Therefore, under measure $\QQ$ the process $X$ is still the jump-diffusion  process with switching  regimes,
\[
X(t)=T^{c+\sigma\sigma^*}(t)+N^h(t)+\widetilde W^\sigma(t),\qquad t>0,
\]
characterised by $\langle c+\sigma\sigma^*, h, \sigma\rangle$. The theorem is proved.
\end{proof}

\subsection{Relative entropy}

Let $\PP$ and $\QQ$ be two equivalent measures. Under the time horizon $t,\;t>0,$
the relative entropy of $\QQ$ w.r.t. $\PP$ 
is defined by the set of 
functions $H_i(t),$ $t>0,\;i\in D:$ 
\begin{equation}
\label{def:entropy}
H_i(t):=\EE_\QQ\left[\left.
\ln\RN(t)~\right|~\ep(0)=i
\right]=\EE_\PP\left[\left.
\RN(t)\ln\RN(t)~\right|~\ep(0)=i
\right],
\end{equation}
see \cite{Fritelli}.
Here  the Radon-Nikod\'ym derivative $\RN(t)=\mathcal E_t\left(X^*\right)$
 is presented by \eqref{def:RN1}-\eqref{def:RN2}.

\begin{theorem}\label{eqtheo:entropy}
Let conditions \eqref{eq:DoobMeyer*}-\eqref{eq:c*g} hold.

The relative entropy functions $H_i$ 
are expressed by 
\begin{equation}\label{eq:Hi}
H_i(t)=\EE_\QQ\left[T_i^{c^*+(\sigma^*)^2/2}(t) +N_i^{\ln(1+h^*)}(t)\right],
\qquad t\geq0,\; i\in D,
\end{equation}
and satisfy the  system of the integral equations
\begin{equation}
\label{eq:H0H1}
  H_i(t)=a_i(t)+\sum_{j\in D\setminus\{i\}}\int_0^tH_j(t-u)f_{ij}(u)\rmd u,
  \qquad t\geq 0,\;i\in D.
\end{equation}
where functions $a_i$ are defined by
\begin{equation}
\label{eq:aQ}
a_i(t)=\int_0^t%
b_i(u)\overline F_i^\QQ(u)%
\rmd u,\quad t\geq0,\; i\in D.
\end{equation}
Here
\begin{equation}
\label{eq:Fb}
\begin{aligned}
\overline F_i^\QQ(u)=&\exp\left(-\int_0^u\gamma_i^\QQ(u')\rmd u'\right),\\
b_i(u)=&\gamma_i^\PP(u)-\gamma_i^\QQ(u)
+\phi_i(u)
+\frac12\sigma_i^*(u)^2,\\
\phi_i(u)=&\left\{
\begin{aligned}
\gamma_i^\QQ(u)\ln\left[\frac{\gamma_i^\QQ(u)}{\gamma_i^\PP(u)}\right],&
\quad\text{~~if~~}\gamma_i^\PP(u)\neq0,\\
0,&\quad\text{~~if~~}\gamma_i^\PP(u)=0,
\end{aligned}
\right.\\
&u\geq0,\; i\in D.
\end{aligned}
\end{equation}
\end{theorem}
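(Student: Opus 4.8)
The plan is to start from the explicit form of the Radon--Nikod\'ym density. By \eqref{def:RN1} the logarithm of $Z(t)=\mathcal E_t(X^*)$ splits as
\[
\ln Z(t)=T^{c^*-(\sigma^*)^2/2}(t)+N^{\ln(1+h^*)}(t)+W^{\sigma^*}(t),
\]
so that, by \eqref{def:entropy}, $H_i(t)$ is the $\QQ$-expectation of this sum conditioned on $\ep(0)=i$. I would evaluate the three contributions separately: the telegraph and jump terms are carried along unchanged, while the diffusion term must first be rewritten under $\QQ$.

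The crux of formula \eqref{eq:Hi} is the treatment of the Wiener part. By part (b) of Girsanov's Theorem \ref{theo:Girsanov}, $\widetilde B(t)=B(t)-L^*(t)$ with $L^*=T^{\sigma^*}$ is a $\QQ$-Brownian motion, so that $\rmd B(u)=\rmd\widetilde B(u)+\rmd L^*(u)$ with $\rmd L^*(u)=(\sigma^*)^\dagger(u)\rmd u$ path-by-path. Substituting into $W^{\sigma^*}(t)=\int_0^t(\sigma^*)^\dagger(u)\rmd B(u)$ decomposes it into the $\QQ$-It\^o integral $\int_0^t(\sigma^*)^\dagger\rmd\widetilde B$ plus the finite-variation part $\int_0^t\big((\sigma^*)^\dagger(u)\big)^2\rmd u=T^{(\sigma^*)^2}(t)$. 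Local square integrability of the $\sigma_i^*$ makes the first piece a true $\QQ$-martingale of zero mean, whence $\EE_\QQ[W^{\sigma^*}(t)\mid\ep(0)=i]=\EE_\QQ[T^{(\sigma^*)^2}(t)\mid\ep(0)=i]$. Added to the velocity $c^*-(\sigma^*)^2/2$ already present in the telegraph term, this converts it into $c^*+(\sigma^*)^2/2$, which establishes \eqref{eq:Hi}.

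For the integral equations I would observe that the right-hand side of \eqref{eq:Hi} is precisely a mean of the type treated in Proposition \ref{prop:mu}, now computed under $\QQ$ and with the modified characteristics: velocity regimes $c_i^*+(\sigma_i^*)^2/2$ in place of $c_i$ and jump sizes $\ln(1+h_i^*)$ in place of $h_i$ (the Wiener component having been absorbed above and contributing no mean). By part (a) of Theorem \ref{theo:Girsanov}, under $\QQ$ the inter-switching times stay independent and the driving process keeps its renewal/semi-Markov structure, with hazard rates $\gamma_i^\QQ$, survival functions $\overline F_i^\QQ$ and densities $f_{ij}^\QQ$, $f_i^\QQ=\gamma_i^\QQ\overline F_i^\QQ$; the non-exploding condition \eqref{eq:blowupQ} ensures that the derivation of Proposition \ref{prop:mu} goes through verbatim. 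This yields the system \eqref{eq:H0H1} with
\[
a_i(t)=\int_0^t\Big[\big(c_i^*(u)+\tfrac12(\sigma_i^*(u))^2\big)\overline F_i^\QQ(u)+\ln(1+h_i^*(u))\,f_i^\QQ(u)\Big]\rmd u.
\]

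Finally I would match $a_i$ with \eqref{eq:aQ}--\eqref{eq:Fb}. Using $f_i^\QQ=\gamma_i^\QQ\overline F_i^\QQ$ to factor out $\overline F_i^\QQ(u)$, the integrand becomes $b_i(u)\overline F_i^\QQ(u)$ with $b_i=c_i^*+\tfrac12(\sigma_i^*)^2+\gamma_i^\QQ\ln(1+h_i^*)$. The Girsanov relations \eqref{eq:HRF*}, namely $c_i^*=\gamma_i^\PP-\gamma_i^\QQ$ and $1+h_i^*=\gamma_i^\QQ/\gamma_i^\PP$, turn the first term into $\gamma_i^\PP-\gamma_i^\QQ$ and the last into $\phi_i$, reproducing \eqref{eq:Fb} exactly. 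I expect the main obstacle to be the careful bookkeeping of the diffusion drift under $\QQ$ together with the integrability check that makes its $\QQ$-martingale part vanish; the remaining steps are a direct transcription of Proposition \ref{prop:mu} into the $\QQ$-setting followed by the algebraic substitution above.
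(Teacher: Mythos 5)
Your proposal is correct and follows essentially the same route as the paper: decompose $\ln Z(t)$ via \eqref{def:RN1}, use Girsanov's Theorem \ref{theo:Girsanov} to shift the Wiener part into a zero-mean $\QQ$-integral plus the drift $T^{(\sigma^*)^2}$ (the paper cites part (c), which is exactly the part (b) computation you spell out), and then transcribe Proposition \ref{prop:mu} into the $\QQ$-setting with velocities $c_i^*+(\sigma_i^*)^2/2$, jumps $\ln(1+h_i^*)$ and hazard rates $\gamma_i^\QQ$, finishing with the substitution $c_i^*=\gamma_i^\PP-\gamma_i^\QQ$, $1+h_i^*=\gamma_i^\QQ/\gamma_i^\PP$ from \eqref{eq:HRF*}. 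In fact your write-up makes explicit the bookkeeping (the factorisation $f_i^\QQ=\gamma_i^\QQ\overline F_i^\QQ$ and the matching of $a_i$ with \eqref{eq:aQ}--\eqref{eq:Fb}) that the paper compresses into ``follow from Proposition \ref{prop:mu} and Theorem \ref{theo:Girsanov}.''
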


\begin{proof}
Owing to \eqref{def:RN1}
\begin{equation}
\label{eq:H}
H_i(t)=\EE_\QQ\left[\left.
\ln\RN(t)~\right|~\ep(0)=i
\right]=\EE_\QQ\left[
T_i^{c^*-(\sigma^*)^2/2}(t)+N_i^{\ln(1+h^*)}(t)+W_i^{\sigma^*}(t)
\right],
\end{equation}
where the alternating tendencies $T_i^{c^*-(\sigma^*)^2/2}$, the jump process $N_i^{\ln(1+h*)}$ and the Wiener
process $W_i^{\sigma^*}$
are defined by \eqref{def:JTDsum*} (with $c_i^*-(\sigma_i^*)^2/2$ instead of $c_i^*$ and 
$\ln(1+h_i^*)$ instead of $h_i^*,\;i\in D$). 

By Theorem \ref{theo:Girsanov}, part (c),
the process
$T_i^{c^*-(\sigma^*)^2/2}(t)+N_i^{\ln(1+h^*)}(t)+W_i^{\sigma^*}(t)$
under measure $\QQ$
becomes 
$T_i^{c^*+(\sigma^*)^2/2}(t)+N_i^{\ln(1+h^*)}(t)+\widetilde W_i^{\sigma^*}(t),$
where $\widetilde W_i^{\sigma^*}(t)$ 
is the stochastic integral w.r.t. the $\QQ$-Brownian motion $\widetilde B$. 
Therefore,  $\EE_\QQ \left[\widetilde W_i^{\sigma^*}(t)\right]=0,\;t\geq0,$ and
\begin{equation*}
\begin{aligned}
H_i(t)=&
\EE_\QQ\left[
T_i^{c^*+(\sigma^*)^2/2}(t) +N_i^{\ln(1+h^*)}(t)+\widetilde W_i^{\sigma^*}(t)
\right]\\
=&\EE_\QQ\left[T_i^{c^*+(\sigma^*)^2/2}(t) +N_i^{\ln(1+h^*)}(t)\right],
\end{aligned}\end{equation*}
which gives \eqref{eq:Hi}.

Equations \eqref{eq:H0H1}-\eqref{eq:Fb} follow from Proposition
\ref{prop:mu} and Theorem \ref{theo:Girsanov}, see \eqref{eq:HRF*}.
\end{proof}

It is easy to see that  functions $b_i$  
defined by \eqref{eq:Fb}    
are non-negative, 
$b_i(u)\geq0,\;  u\geq0,$ $i\in D.$ Hence,  functions $a_i$ defined by \eqref{eq:aQ} are also non-negative,
$a_i(t)\geq0,\; t\geq0,$ $i\in D.$

\begin{remark}\label{rem:Laplace}
By applying the Laplace transform $f\to \hat f(s)=\int_0^\infty \rme^{-st}f(t)\rmd t$
to \eqref{eq:H0H1} one  can obtain the system\textup{:}
\[
\hat H_i(s)=\hat a_i(s)+\sum_{j\in D\setminus\{i\}}\hat f_{ij}(s)\hat H_j(s), \quad s>0,\qquad i\in D.
\]
if the transformations $\hat a_i(s),\;i\in D,$ exist.
The above system yields the unique solution.

For example\textup{,}  if $d=2,$ and  $b_1,\;$ $b_2$ \textup{(}see \eqref{eq:Fb}\textup{)}
 are constants\textup{;} if the alternating distributions of inter-arrival times 
are exponential\textup{,} $\gamma_i^\QQ(t)=\lambda_i^*=\const,\;i\in \{1, 2\},$ therefore
in this  simple case 
\begin{equation*}
\begin{aligned}
\hat H_1(s)=&\frac{B}{s^2}+\frac{A_1}{s+\lambda_1^*+\lambda_2^*},\\
\hat H_2(s)=&\frac{B}{s^2}+\frac{A_2}{s+\lambda_1^*+\lambda_2^*},
\end{aligned}\quad s>0,
\end{equation*}
where 
\begin{equation}\label{def:d}
A_1=\dfrac{\lambda_1^*(b_1-b_2)}{(\lambda_1^*+\lambda_2^*)^2},\quad
A_2=\frac{\lambda_2^*(b_2-b_1)}{(\lambda_1^*+\lambda_2^*)^2},\quad
B=\frac{\lambda_2^*b_1+\lambda_1^*b_2}{\lambda_1^*+\lambda_2^*}.
\end{equation}

This corresponds to the following explicit solution of \eqref{eq:H0H1}\textup{:} 
the relative entropy functions 
$H_1(t), H_2(t),\;t\geq0,$ are expressed by
\begin{equation}
\label{eq:entropy-const}
\begin{aligned}
H_1(t)=H_1(t ; \lambda_1^*, \lambda_2^*)
=&Bt%
+A_1
\left[1-\rme^{-(\lambda_1^*+\lambda_2^*) t}\right],\\ 
H_2(t)=H_2(t ; \lambda_1^*, \lambda_2^*)
=&Bt%
+A_2
\left[1-\rme^{-(\lambda_1^*+\lambda_2^*) t}\right].
\end{aligned}\end{equation}
\end{remark}

\section{Equivalent martingale measure}\label{sec:4}
\setcounter{equation}{0}

Consider  the  jump-diffusion
process
$X=T^c(t)+H^h(t)+W^\sigma(t),\;t\geq0,$ 
 with the switching  hazard rate functions $\gamma_i^\PP,\;i\in D,$  of inter-arrival times
 $T_n,\;n\geq1,$ see the definitions in \eqref{def:Tsum}-\eqref{def:Dsum}.

 Let the equivalent measure $\QQ$ be defined by 
 the Radon-Nikod\'ym density $Z(t)=\dfrac{\rmd\QQ}{\rmd\PP}|_{\mathcal F_t},\;t\geq0$, see  
 \eqref{def:RN1}-\eqref{def:RN2}.
Let driving parameters 
$c_i^*,
\;h_i^*,\;h_i^*>-1,$ and $\sigma_i^*,\;i\in D,$ 
satisfy  \eqref{eq:DoobMeyer*}-\eqref{eq:c*g}.   
By Theorem \ref{theo:Girsanov} under measure $\QQ$
 the hazard rate functions  $\gamma_i^\QQ$ are  defined by  \eqref{eq:HRF*}.

The family of the equivalent \emph{ martingale} measures for $X$ can be disclosed precisely.

\begin{theorem}\label{theo:MM}
Measure $\QQ$ is the
 martingale measure for 
process $X$
 if and only if 
\begin{equation}
\label{eq:s*}
c_i(t)+\sigma_i(t)\sigma_i^*(t)+
\gamma_i^\QQ(t)h_i(t)=0,
\quad t\geq0,\;i\in D.
\end{equation}
\end{theorem}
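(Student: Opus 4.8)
The plan is to reduce the claim to the martingale characterisation of Theorem~\ref{theo:DoobMeyer} by first transporting $X$ to the measure $\QQ$ with Girsanov's Theorem. Since $\QQ$ is equivalent to $\PP$, requiring that $\QQ$ be a martingale measure for $X$ is the same as requiring that $X$ be a $\QQ$-martingale under the fixed time horizon. The first move is therefore to replace the $\PP$-law of $X$ by its $\QQ$-law.

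Next, I would appeal to Theorem~\ref{theo:Girsanov}(c): under $\QQ$ the process $X$ is again a regime-switching jump-diffusion, now characterised by the triplet $\langle c+\sigma\sigma^*,\,h,\,\sigma\rangle$, with the inter-switching hazard rate functions $\gamma_i^\QQ$ supplied by \eqref{eq:HRF*}. In this representation the Wiener component $\widetilde W^\sigma$ is an It\^o integral against the $\QQ$-Brownian motion $\widetilde B$ of part~(b), hence already a $\QQ$-martingale; exactly as observed just before Theorem~\ref{theo:DoobMeyer}, it then suffices to test the drift-plus-jump part $T^{c+\sigma\sigma^*}+N^h$ for the martingale property.

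I would then read off Theorem~\ref{theo:DoobMeyer} in the $\QQ$-parameters. That theorem characterises martingality for a switching process in terms of its velocity regimes, jump amplitudes and hazard rates alone, and its hypotheses are purely structural: independent inter-switching times, a semi-Markov switching mechanism, and non-exploding hazard rates. Substituting $c_i+\sigma_i\sigma_i^*$ for $c_i$ and $\gamma_i^\QQ$ for $\gamma_i$ in the Doob--Meyer condition \eqref{eq:DoobMeyer} gives
\[
\gamma_i^\QQ(t)\,h_i(t)+\bigl(c_i(t)+\sigma_i(t)\sigma_i^*(t)\bigr)=0,
\qquad t\geq0,\;i\in D,
\]
which is precisely \eqref{eq:s*}; the two-sided implication is inherited directly from Theorem~\ref{theo:DoobMeyer}.

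The step I expect to carry the real weight is the appeal to Theorem~\ref{theo:Girsanov}(c): one must be certain that the change of measure preserves all the structure used in the proof of Theorem~\ref{theo:DoobMeyer}, so that the latter may legitimately be re-applied on the $\QQ$-side. Concretely this means checking that under $\QQ$ the inter-switching times stay independent with genuine hazard rates, that the diffusion coefficient $\sigma$ is untouched while only the drift is shifted by $\sigma\sigma^*$, and that the new rates still obey the non-exploding condition \eqref{eq:blowupQ}. All three are exactly the content of Theorem~\ref{theo:Girsanov}, so once its conclusions are invoked the martingale criterion is nothing more than Theorem~\ref{theo:DoobMeyer} expressed in the transformed parameters.
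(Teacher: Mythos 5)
Your proposal is correct and takes essentially the same route as the paper: invoke Theorem~\ref{theo:Girsanov}(c) to identify the $\QQ$-dynamics of $X$ as a regime-switching jump-diffusion with triplet $\langle c+\sigma\sigma^*,\,h,\,\sigma\rangle$ and hazard rates $\gamma_i^\QQ$ from \eqref{eq:HRF*}, then apply the two-sided martingale characterisation of Theorem~\ref{theo:DoobMeyer} in the transformed parameters, which yields exactly \eqref{eq:s*}. The paper's proof is precisely this two-step reduction (noting additionally that the result is classical, cf.\ Bellamy and Jeanblanc), so nothing further is needed.
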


\begin{proof}  This result is well known, see e.g. \cite{BellamyM},  Proposition 3.1.
The proof follows from Theorem \ref{theo:DoobMeyer} and Theorem \ref{theo:Girsanov}.

Let measure $\QQ$ be defined by \eqref{def:RN1}-\eqref{def:RN2}. 
Then, by
Theorem \ref{theo:Girsanov}, part (c), under measure $\QQ$ the process 
\[
X(t)=T^{c+\sigma\sigma^*}(t)+N^h(t)+\widetilde W^\sigma(t),\qquad t>0,
\]
is again the  jump-diffusion process with switching regime.
The martingale condition \eqref{eq:DoobMeyer} of Theorem \ref{theo:DoobMeyer} 
becomes  \eqref{eq:s*}.

The theorem is proved.
\end{proof}

The relative entropy functions $H_i(t),\;t>0,\;i\in D,$ 
of the \emph{martingale} measure $\QQ$ w.r.t. $\PP$
solve system \eqref{eq:H0H1} with functions $a_i$  specified 
by \eqref{eq:aQ} and \eqref{eq:Fb}. 
Driving parameters $c_i^*, h_i^*, \sigma_i^*$ and switching intensities $\gamma_i^\PP, \gamma_i^\QQ,\;i\in D,$
satisfy  \eqref{eq:HRF*}   and  \eqref{eq:s*}.

Consider the following examples when the equivalent martingale measure $\QQ$ is unique.

\begin{example}[\textbf{Jump-telegraph process}]\label{ex1}
Consider  process $X$ missing the diffusion component\textup{,}
\[
X(t)=T^c(t)+N^h(t),\qquad t\geq0,
\]
see \eqref{def:Tsum}-\eqref{def:Jsum}.

Assume\textup{,} that $h_i(t)\neq0$ almost 
everywhere,\footnote{If\textup{,} in contrary\textup{,} $h_i(u)=0$ on 
a whole interval\textup{,} $u\in(a, b)\subset[0, \infty),$  then\textup{,}
due to \eqref{eq:s*} with $\sigma_i\equiv0,$ 
we have no martingale measures \textup{(}if   $c_i\neq0$ on the interval $(a, b)$\textup{),} or 
infinitely many ones \textup{(}if $c_i=0$ with free fragment of hazard rate function  $\gamma_i^\QQ$\textup{)}. }
and $h_i(t)$ is of the opposite sign with $c_i(t):$
\begin{equation}
\label{eq:chneg}
c_i(t)/h_i(t)<0,\quad t>0,\qquad  i\in D.
\end{equation}
Moreover\textup{,} let functions ${c_i}/{h_i},\;i\in D,$ be locally integrable and  
\begin{equation}\label{*}
\int_0^\infty\frac{c_i(u)}{h_i(u)}\rmd u=-\infty,\qquad i\in D.
\end{equation}
Then\textup{,}  by Theorem \textup{\ref{theo:MM}}
 the equivalent  martingale measure $\QQ$ exists  and it is unique
 with 
 the hazard rate functions of interarrival times defined by
 \begin{equation}
\label{eq:aQunique}
\gamma_i^\QQ(t)=-c_i(t)/h_i(t)>0,\qquad t>0,\quad i\in D.
\end{equation}
Here \eqref{*} is the non-exploding condition for measure $\QQ$.
The corresponding 
measure transformation is determined by the  functions $c_i^*, h_i^*,\;i\in D,$
\[
c_i^*(t)=\gamma_i^\PP(t)-\gamma_i^\QQ(t),
\qquad
h_i^*(t)=-1+\gamma_i^\QQ(t)/\gamma_i^\PP(t),\quad t>0,\quad i\in D,
\]
if $\gamma_i^\PP(t)>0\; a.e.,$ see \eqref{eq:HRF*} and \eqref{eq:aQunique}.
\footnote{Measures $\QQ$ and $\PP$ are equivalent.
If  $\PP$-distribution of the interarrival times has a  {``}dead{''} zone\textup{:} 
$\gamma_i^\PP(u)\equiv0,$  $u\in(a, b),$  for some time interval $(a, b)\subset[0, \infty),$ 
then due to  \eqref{eq:HRF*} and \eqref{eq:s*}
for any martingale measure $\QQ$ the hazard rate function $\gamma_i^\QQ$ also vanishes on $(a, b),$
$\gamma_i^\QQ(u)\equiv0$ and $c_i(u)=c_i^*(u)\equiv0,\; u\in(a, b).$}

 The entropy functions $H_i(t),\;i\in D,$ solve  system \eqref{eq:H0H1} with 
\[
a_i(t)=\int_0^t\left[
\gamma_i^\PP(u)-\gamma_i^\QQ(u)
+\gamma_i^\QQ(u)\ln\frac{\gamma_i^\QQ(u)}{\gamma_i^\PP(u)}
\right]\overline F_i^\QQ(u)
\rmd u,
\]
where $\gamma_i^\QQ$ are defined by \eqref{eq:aQunique}. The survival functions $\overline F_i^\QQ,\; i\in D,$
are defined in
\eqref{barFf}.

 If the inequalities \eqref{eq:chneg} do not hold\textup{,} the martingale measure does not exist.

In particular,
if the    parameters $c_i, h_i,\; h_i\neq0,$ are constant such that $c_i/h_i<0,\;i\in D,$ 
with exponentially distributed inter-switching times, $\gamma_i^\PP=\lambda_i,\;i\in D,$  
then by \eqref{eq:aQunique} under the martingale measure $\QQ$ the inter-switching times are 
again exponentially distributed with the   switching 
intensities $\lambda_i^\QQ=-c_i/h_i,\;i\in D.$  
If $d=2$, then the closed form of the entropy functions is found, see     
Remark \textup{\ref{rem:Laplace},}   formulae \eqref{def:d}-\eqref{eq:entropy-const}\textup{,}
and more detailed analysis in
Section \textup{\ref{sec:const}} below.
In this case the unique martingale measure $\QQ$ is defined by the 
Radon-Nikod\'ym density \eqref{def:RN1}-\eqref{def:RN2}  with constant $c_i^*$ and $h_i^*$\textup{:}
\begin{equation}
\label{eq:c*h*classic}
c_i^*=\lambda_i-\lambda_i^*,\qquad h_i^*=-1+\frac{\lambda_i^*}{\lambda_i},
\end{equation}
where $\lambda_i^*=-c_i/h_i>0,\;i\in D=\{1, 2\},$ are the new
 alternating intensities of the inter-switching times\textup{,}
see   e.g. \textup{\cite{KR}.}

Formulae  \eqref{def:d}-\eqref{eq:entropy-const} for the entropy functions $H_i(t),\; t\geq0,$ hold with
\[
b_i=\lambda_i+\frac{c_i}{h_i}-\frac{c_i}{h_i}\ln\left[-\frac{c_i}{h_i\lambda_i}\right],\qquad i\in D=\{1, 2\}.
\]
\end{example}

\begin{example}[\textbf{Diffusion process}]\label{ex2}
  Consider the diffusion process missing the jump component and switching\textup{,}
   $c$ is locally integrable and functions $\sigma,\;$ $c/\sigma$ are locally square integrable.
  Assume that $\sigma(u)\neq0~a.e.$
 
  Let $\QQ$ be an equivalent measure.
  In this case the Radon-Nikod\'ym derivative of $\QQ$ is defined by 
  \[
  \RN(t)=\exp\left(
  \int_0^t\sigma^*(u)\rmd B(u)-\frac12\int_0^t\sigma^*(u)^2\rmd u
  \right),
  \]
  where $\sigma^*$ is the  locally square integrable function. 
  By Girsanov\textup{'}s Theorem the process
  \[
  \widetilde B=B-\int_0^t\sigma^*(u)\rmd u
  \]
  is $\QQ$-Brownian motion.   Hence\textup{,}    process $X$ takes the form
  \[X(t)=\int_0^t\left[c(u)+\sigma(u)\sigma^*(u)\right]\rmd u
  +\int_0^t\sigma^*(u)\rmd\widetilde B(u),\qquad t\geq0.\]  This is a martingale if and only if 
  $\sigma\sigma^*\equiv-c$.
  
By \eqref{def:entropy} the relative entropy $H(t)$ of $\QQ$ w.r.t. $\PP$
  is 
  \[\begin{aligned}
  H(t)=\EE_\QQ\left[
  \ln\RN(t)
  \right]
  =&\EE_\QQ\left[
  \int_0^t\sigma^*(u)\rmd B(u)-  \frac12\int_0^t\sigma^*(u)^2\rmd u
  \right]\\
=& \EE_\QQ\left[
\int_0^t\sigma^*(u)\rmd \widetilde B(u)+  \frac12\int_0^t\sigma^*(u)^2\rmd u
  \right]\\
  =&\frac12\int_0^t\sigma^*(u)^2\rmd u.
  \end{aligned}\]

  Therefore, the relative entropy of the \textup{(}unique\textup{)} martingale measure is given by
  \[
 H(t)=\frac12\int_0^t\left[\frac{c(u)}{\sigma(u)}\right]^2\rmd u,\qquad t>0.
  \]
\end{example}

\begin{remark}[\textbf{Diffusion process with switching tendencies and diffusion coefficients}]\label{rem:TDP}
Consider  the case of the \emph{diffusion process,} 
\[
X(t)=T^c(t)+W^\sigma(t),\qquad t\geq0,
\]
with the switching tendencies $c_i=c_i(t)$ 
and diffusion coefficients $\sigma_i=\sigma_i(t)\neq0,\;t>0,\;i\in D,$
where the jump component is missing.

In this case there are infinitely many equivalent martingale measures. 

Theorem \textup{\ref{theo:MM}} shows   
that the measure transformation defined by $\RN\mid_{\mathcal F_t}=\mathcal E_t(X^*),\;$ see \eqref{def:RN1} with
$h_i^*=0,$ $c_i^*=0$ and 
 $\sigma_i^*(t)=-c_i(t)/\sigma_i(t),\; t\geq0,\;i\in D,$ 
 eliminates the drift component similarly as in Example \textup{\ref{ex2}.}
 Under measure $\QQ$
 process $X$ becomes the martingale of the form   
$
 X(t)\equiv \widetilde W^\sigma(t),\; t\geq0,
$
 whereas by \eqref{eq:s*}
 the inter-switching times are \emph{arbitrary distributed}. Here $\widetilde W^\sigma$ is the 
 stochastic integral \eqref{def:Dsum} based on 
 $\QQ$-Brownian motion $\widetilde B.$
 
This model has been analysed in \textup{\cite{ElliottAnnFin}} by using  the  Esscher transform under switching regimes.
  This transformation does not affect the distribution of inter-switching times and the corresponding
  equivalent martingale measure 
  is of  the minimal relative entropy, see  \textup{\cite{ElliottAnnFin},} Proposition \textup{3.1.}

In the next section we study in detail the jump-diffusion model with switching  regimes based on the 
Markov underlying process $\ep$. We have discovered that in this case
the Esscher transform does not produce the minimal relative entropy.
  \end{remark}

\section{
Esscher transform and minimal entropy martingale measure }\label{sec:const}
\setcounter{equation}{0}

Typically, the jump-diffusion model with switching regime  has no martingale measure or it has infinitely many.
The rare examples of the unique martingale measure are presented above 
(Example \ref{ex1} and Example \ref{ex2}). 
 In this section we discuss the case when the infinitely many martingale measures exist
 and discuss some methods to select one.
The first method is based on the so-called  the Esscher transform under switching regimes.

Let $X=T^c(t)+N^h(t)+W^\sigma(t),\;t\geq0,$
be the jump-diffusion process with switching regime, 
see \eqref{def:Tsum}-\eqref{def:Dsum},
defined on the filtered probability space $(\Omega, \mathcal F, \mathcal F_t, \PP)$.

 Let  
 $\sigma_i\neq0,\; i\in D,$ a. s.   
The case with missed diffusion ($\sigma_i\equiv0,\;i\in D$) is analysed in Example \ref{ex1}.

 To choose an equivalent martingale measure by a reasonable way
  consider   the deterministic measurable functions  $\theta_i=\theta_{i}(t),\;t\geq0,\;i\in D,$
which define the regime switching processes $\theta_i^\dagger=\theta_i^\dagger(t),\;$  $t\geq0,\;i\in D,$ 
similarly as in \eqref{def:transform}.
Let measure $\QQ_\theta$ (equivalent to $\PP$) be defined 
by the density
\begin{equation}
\label{def:Esscher}
\frac{\rmd \QQ_\theta}{\rmd\PP}\mid_{\mathcal F_t}
:=\frac{\exp\left(\int_0^t\theta^\dagger(s)\rmd Y(s)\right)}
{\EE_\PP\left[
\exp\left(\int_0^t\theta^\dagger(s)\rmd Y(s)\right)~\left|~\mathcal F_t^\ep\right.
\right]}.
\end{equation}
Here  $Y(t)=T^{c-\sigma^2/2}(t)+N^{\ln(1+h)}(t)+W^\sigma(t),\;t\geq0,$ see   \eqref{eq:stochexp}, and 
$\mathcal F_t^\ep$ is the $\PP$-augmentation of the natural filtration generated by $\ep$.
This particular choice of the new measure  is named a regime switching Esscher transform (or exponential tilting),
see Elliott et al \cite{ElliottAnnFin}.

It is easy to see that
\[
{\EE_\PP\left[
\exp\left(\int_0^t\theta^\dagger(s)\rmd Y(s)\right)~\left|~\mathcal F_t^\ep\right.
\right]}
=\exp\left(
T^{\theta(c-\sigma^2/2)}(t)+N^{\theta\ln(1+h)}(t)\right)
\exp\left(T^{\theta^2\sigma^2/2}(t)
\right).
\]
Therefore the Radon-Nikod\'ym derivative of the Esscher transforms is given by
\[
\frac{\rmd \QQ_\theta}{\rmd\PP}\mid_{\mathcal F_t}=\exp\left(
W^{\theta\sigma}(t)-T^{\theta^2\sigma^2/2}(t)
\right),
\]
which corresponds to Radon-Nikod\'ym derivative \eqref{def:RN1} with 
\begin{equation}\label{eq:c*0h*0}
\sigma_i^*=\theta_i\sigma_i,\quad
c_i^*=0,\quad h_i^*=0,\qquad i\in D.
\end{equation}
 Observe, that by Girsanov's Theorem (see Theorem \ref{theo:Girsanov}, equation 
 \eqref{eq:HRF*})
 due to \eqref{eq:c*0h*0} the distribution of inter-switching times  are not changed under such defined measure, 
 $\gamma^{\QQ_\theta}\equiv\gamma^\PP$.
 
Hence, due to \eqref{eq:c*0h*0},
 the martingale condition (see Theorem \ref{theo:MM}, equation \eqref{eq:s*}) 
can be written as 
  \begin{equation}\label{eq:theta}
\theta_i(t)=-\frac{c_i(t)+\gamma_i^{\PP}(t)h_i(t)}{\sigma_i(t)^2},\quad t>0,\; i\in D.
\end{equation}
 It is known that the Esscher measure transform defined by \eqref{def:Esscher} with parameters 
 $\theta_i,\;i\in D,$ determined by \eqref{eq:theta}
 corresponds to the \emph{minimal relative entropy}, see
\cite{ElliottAnnFin}, Proposition 3.1.
The similar approach with the Esscher measure transform produces the  { minimal relative entropy}
in the case of L\'evy processes (see \cite{EscheSchweizer,FM}).

For our model based on  Brownian motion \emph{with jumps and with switching regimes }
the Esscher transform  does not produce the minimal relative entropy.

In the rest of  this section for the sake of simplicity,
 we consider the Markov case with $d=2$, when 
the alternating distributions of inter-switching times 
are exponential both under measure $\PP$ and under an equivalent measure $\QQ$\textup{,} 
i.e. \[\gamma_i^\PP=\lambda_i=\const>0, \qquad \gamma_i^\QQ=\lambda_i^*=\const>0,\quad i\in D=\{1, 2\},\]
and the driving parameters $c_i, h_i, \sigma_i,\;i\in D=\{1, 2\},$ are constant.
Here measure $\QQ$ is defined by \eqref{def:JTDsum*}-\eqref{def:RN2}
with constant parameters $c_i^*, h_i^*, \sigma_i^*,\; i\in D=\{1, 2\},$
satisfying \eqref{eq:DoobMeyer*}-\eqref{eq:c*g}.

To analyse  the set of equivalent martingale measures from the viewpoint of the relative entropy 
we are looking for the solution of 
the integral equations  \eqref{eq:H0H1}. 
Since this   jump-diffusion process  $X$ is bounded, 
 by Theorem 2.1 of \cite{Fritelli}   there exists 
 a unique minimal entropy martingale measure.   

As it is shown in Remark \ref{rem:Laplace}, 
in this case  the relative entropy functions are defined by 
\eqref{eq:entropy-const}:
\begin{equation*}
\begin{aligned}
H_1(t)=H_1(t ; \lambda_1^*, \lambda_2^*)
=&Bt
+A_1
\left[1-\rme^{-(\lambda_1^*+\lambda_2^*) t}\right],\\ 
H_2(t)=H_2(t ; \lambda_1^*, \lambda_2^*)
=&Bt
+A_2
\left[1-\rme^{-(\lambda_1^*+\lambda_2^*) t}\right],
\end{aligned}
\qquad t\geq0,
\end{equation*}
where $A_1, A_2$ and $B$ are defined by \eqref{def:d} and \eqref{eq:Fb}.

\begin{remark}
Note that $B=0$ \textup{(}or\textup{,} equivalently\textup{,} $b_1=b_2=0)$
 if and only if process $X$ is already 
a $\PP$-martingale. Indeed, $b_1=b_2=0$ if and only if
  $\sigma_i^*=0,\;i\in D=\{1, 2\},$
 and $\lambda_i^*=\lambda_i,\;i\in D=\{1, 2\},$ 
  \textup{(}see  \eqref{eq:Fb}\textup{)}
 \textup{,} or equivalently\textup{,} 
 $\QQ=\PP$. Hence $A_1=A_2=0$ and  $H_1(t)=H_2(t)\equiv0$.
\end{remark}

\begin{remark}\label{rem:5.2}
Let the jump-diffusion process $X$ be a L\'evy process\textup{,}
 i.e. the alternation is missing
  and $c_1=c_2=c,\; h_1=h_2=h\neq0,\; \sigma_1=\sigma_2=\sigma\neq0$ are constant.
 This is the case of a \textup{ Markov jump-diffusion process.}
 
 Let the new measure $\QQ$ be defined by \eqref{def:JTDsum*}-\eqref{def:RN2}.

 Therefore\textup{,}  by \eqref{eq:entropy-const}
 the relative entropy functions are identical and linear in $t,$
\[ H_1\equiv H_2=Bt,\;t\geq0,\]
 where\textup{,} due to 
\eqref{def:d}\textup{,} $B=b=\lambda-\lambda^*+\lambda^*\ln\lambda^*/\lambda
+(\sigma^*)^2/2,$
 and $A_1=A_2=0.$
Here $\lambda=\gamma^\PP$ and $\lambda^*=\gamma^\QQ$ are the constant jump intensities
under measure $\PP$ and measure $\QQ$ respectively\textup{;}
the parameter $\sigma^*$ satisfies martingale condition \eqref{eq:s*}\textup{:}
\[
\sigma^*=-\frac{c+\lambda^* h}{\sigma}.
\]

In this case the martingale measure with the minimal relative entropy is defined
by the jump intensity $\lambda^*$ which satisfies the algebraic equation\textup{:}
\begin{equation}\label{cond:ME}
b'(\lambda^*)\equiv\ln\lambda^*/\lambda+\frac{h^2}{\sigma^2}\left(
\frac{c}{h}+\lambda^*
\right)=0.
\end{equation}

The latter equation is equivalent to
\begin{equation}\label{C:FM}
c+\beta^*\sigma^2+\lambda h\exp(\beta^* h)=0,
\end{equation}
where the following change of variables $\lambda^*=\lambda\exp(\beta^*h)$ is applied.
In this particular case of L\'evy process $X,$
equation \eqref{C:FM} coincides with condition \textup{(C)} of \textup{\cite{FM},} which gives the 
minimal relative entropy $H(t)$ under a measure defined by the Esscher transformation\textup{.}
In this example equation \eqref{eq:entropy-const} becomes
\begin{equation}\label{E:FM}
H(t)=\left[
\lambda(1-\exp(\beta^*h)+\beta^*h\exp(\beta^*h))+\frac{h^2}{2\sigma^2}\left(\frac{c}{h}+\lambda\exp(\beta^*h)\right)^2
\right]t,
\end{equation}
where $\beta^*$ is the \textup{(}unique\textup{)} solution of \eqref{C:FM}.
Equation \eqref{E:FM} corresponds to equation \textup{(3.9)} from \textup{\cite{FM}.}

In the case of the jump-diffusion process with \textup{alternating parameters,} such coincidence is not available\textup{.}
In this case the minimal entropy functions depend on the initial state and they have a bit more complicated behaviour. 
\end{remark}

Observe that 
functions $b_1=b_1(\lambda_1^*, \sigma_1^*),\; b_2=b_2(\lambda_2^*, \sigma_2^*)$ 
are expressed by summing up of the  two nonnegative and convex functions,
$f(x)=a-x+x\ln(x/a),\; x>0,$ (with $\lambda^*$ for $x$) and $g(y)=y^2/2$ (with $\sigma^*$ for $y$).
Hence, $b_1$ and $b_2$ are  nonnegative and convex. Therefore, function
$B=B(\lambda_1^*, \lambda_2^*, \sigma_1^*, \sigma_2^*)
=\dfrac{\lambda_2^*b_1+\lambda_1^*b_2}{\lambda_1^*+\lambda_2^*}$ 
is also nonnegative.

We analyse the relative  entropy functions  
$H_1=H_1(t)$ and $H_2=H_2(t)$ for small and big times $t$ separately.
These functions  possesses the following time-asymptotics.

\begin{proposition}\label{proposition}
Let the relative entropy functions $H_1$ and $H_2$ be  defined by \eqref{eq:entropy-const}. Thus\textup{,} 
\begin{equation}
\label{sim:short}
H_1(t)\sim b_1t,\qquad H_2(t)\sim b_2t,\qquad t\to0,
\end{equation}
and
\begin{equation}
\label{sim:long}
H_1(t)\sim Bt+A_1,\qquad
H_2(t)\sim Bt+A_2,
\qquad t\to\infty.\end{equation}
\end{proposition}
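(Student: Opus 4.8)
The plan is to read off both asymptotic regimes directly from the closed form \eqref{eq:entropy-const}, in which the only $t$-dependence sits in the linear term $Bt$ and in the bounded exponential correction $A_i\left[1-\rme^{-(\lambda_1^*+\lambda_2^*)t}\right]$, $i\in\{1,2\}$.

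First I would settle the long-time behaviour \eqref{sim:long}. Since both $\QQ$-intensities are strictly positive constants, $\lambda_1^*+\lambda_2^*>0$, the factor $\rme^{-(\lambda_1^*+\lambda_2^*)t}$ decays to zero as $t\to\infty$. Hence
\[
H_i(t)-(Bt+A_i)=-A_i\rme^{-(\lambda_1^*+\lambda_2^*)t}\longrightarrow0,\qquad i\in\{1,2\},
\]
and the convergence is in fact exponentially fast; this is precisely \eqref{sim:long}.

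For the short-time behaviour \eqref{sim:short} I would Taylor-expand the exponential correction about the origin,
\[
1-\rme^{-(\lambda_1^*+\lambda_2^*)t}=(\lambda_1^*+\lambda_2^*)t+O(t^2),\qquad t\to0,
\]
so that \eqref{eq:entropy-const} reads $H_i(t)=\left[B+A_i(\lambda_1^*+\lambda_2^*)\right]t+O(t^2)$. It then remains only to identify the linear coefficient, and the single computational step is to insert the explicit values of $A_1,A_2,B$ from \eqref{def:d} and verify the algebraic identities $B+A_1(\lambda_1^*+\lambda_2^*)=b_1$ and $B+A_2(\lambda_1^*+\lambda_2^*)=b_2$. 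For $i=1$, for instance, one has $A_1(\lambda_1^*+\lambda_2^*)=\lambda_1^*(b_1-b_2)/(\lambda_1^*+\lambda_2^*)$, and adding $B=(\lambda_2^*b_1+\lambda_1^*b_2)/(\lambda_1^*+\lambda_2^*)$ collapses the numerator to $(\lambda_1^*+\lambda_2^*)b_1$, leaving $b_1$; the case $i=2$ is symmetric. This gives $H_i(t)=b_it+O(t^2)$, which is \eqref{sim:short}.

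I do not expect any genuine obstacle here: both limits are immediate from the closed form \eqref{eq:entropy-const}, and the only thing needing verification is the elementary coefficient identity $B+A_i(\lambda_1^*+\lambda_2^*)=b_i$, a one-line cancellation using \eqref{def:d}. The sole hypothesis actually invoked is the strict positivity $\lambda_1^*+\lambda_2^*>0$, which guarantees both that the exponential term vanishes at infinity and that it is smooth at the origin.
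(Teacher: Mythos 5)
Your proposal is correct and follows essentially the same route as the paper: the paper's proof likewise reads both regimes off the closed form \eqref{eq:entropy-const}, expanding the exponential at the origin and using the identity $B+A_i(\lambda_1^*+\lambda_2^*)=b_i$ (which you verify explicitly from \eqref{def:d}, while the paper leaves this cancellation implicit), and it dismisses the long-time asymptotic \eqref{sim:long} as evident, which you justify by noting the exponentially decaying remainder $-A_i\rme^{-(\lambda_1^*+\lambda_2^*)t}$.
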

\begin{proof}
As $t\to0$ formulae \eqref{def:d}-\eqref{eq:entropy-const} lead to \eqref{sim:short}:
\[
H_i(t)=Bt+A_i\left[1-\rme^{-(\lambda_1^*+\lambda_2^*) t}\right]\sim\left[
B+A_i(\lambda_1^*+\lambda_2^*)
\right]t\equiv b_it,\qquad i\in D=\{1, 2\}.
\]
Long-term asymptotic \eqref{sim:long} is evident.
\end{proof}

 We choose the equivalent  measure minimising the relative entropy function
$\mathbf H(t)$ under the martingale condition \eqref{eq:s*}, i.e. $\lambda_i^*,\; \sigma_i^*,\; i\in D=\{1, 2\},$
satisfy the following relations:
\begin{equation}
\label{eq:cond}
\left\{
\begin{aligned}
c_1+\lambda_1^*\cdot h_1+\sigma_1^*\cdot\sigma_1=0,\\
c_2+\lambda_2^*\cdot h_2+\sigma_2^*\cdot\sigma_2=0.
\end{aligned}
\right.
\end{equation}

If   measure $\QQ$ is constructed by way of  minimising $b_1$ and $b_2$,  
we say that $\QQ$ is
the short-term minimal entropy martingale measure 
(MEMM), see \eqref{sim:short};   
 measure $\QQ$ is the long-term MEMM,  if $\QQ$ minimises 
 $B$, see \eqref{sim:long}.

The short-term and the long-term minimal entropy equivalent martingale measures 
are defined as the solutions of the following optimisation problems subject to martingale condition \eqref{eq:cond}\textup{:}
\begin{itemize}
\item
the short-term  MEMM is defined by 
solving 
the problem
w.r.t.
$\sigma_i^*$ and $\lambda_i^*,\;i\in D=\{1, 2\},$
\begin{equation}
\label{eq:short}
\left\{
\begin{aligned}
b_1=\lambda_1-\lambda_1^*+\lambda_1^*\ln\lambda_1^*/\lambda_1
+(\sigma_1^*)^2/2
&\to\min,\\ 
b_2=\lambda_2-\lambda_2^*+\lambda_2^*\ln\lambda_2^*/\lambda_2
+(\sigma_2^*)^2/2
&\to\min.
\end{aligned}
\right.
\end{equation}

\item
the long-term MEMM is defined by
solving the problem w.r.t.
$\sigma_i^*$ and $\lambda_i^*,\;i\in D=\{1, 2\},$
\begin{equation}
\label{eq:long}
B=\frac{\lambda_2^*b_1+\lambda_1^*b_2}{\lambda_1^*+\lambda_2^*}\to\min.
\end{equation}
\end{itemize}

\begin{theorem}\label{theo:5.1}
The solutions of  problems
\eqref{eq:short} and \eqref{eq:long} subject to condition 
\eqref{eq:cond} exist and they are unique\textup{.} 
\end{theorem}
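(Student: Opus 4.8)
The plan is to use the martingale constraint \eqref{eq:cond} to eliminate the volatilities $\sigma_i^*$ and reduce both problems to minimisation over the jump intensities $\lambda_i^*>0$ alone. Solving \eqref{eq:cond} gives $\sigma_i^*=-(c_i+\lambda_i^*h_i)/\sigma_i$, so that each objective in \eqref{eq:short} becomes a function of the single variable $\lambda_i^*$,
\[
b_i(\lambda_i^*)=\lambda_i-\lambda_i^*+\lambda_i^*\ln\frac{\lambda_i^*}{\lambda_i}
+\frac{(c_i+\lambda_i^*h_i)^2}{2\sigma_i^2},\qquad i\in D=\{1,2\}.
\]
First I would record the two elementary facts that drive everything. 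Differentiating twice gives $b_i''(\lambda_i^*)=1/\lambda_i^*+h_i^2/\sigma_i^2>0$, so each $b_i$ is strictly convex on $(0,\infty)$; and since $b_i'(\lambda_i^*)=\ln(\lambda_i^*/\lambda_i)+h_ic_i/\sigma_i^2+h_i^2\lambda_i^*/\sigma_i^2$ increases continuously and strictly from $-\infty$ (as $\lambda_i^*\to0^+$) to $+\infty$ (as $\lambda_i^*\to\infty$), the derivative $b_i'$ is a strictly increasing bijection of $(0,\infty)$ onto $\R$.

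For the short-term problem \eqref{eq:short} this is essentially the whole story: under \eqref{eq:cond} the two states decouple, so the minimisations of $b_1$ and of $b_2$ are independent one-dimensional problems, and strict convexity together with the stated limits of $b_i'$ yields for each $i$ a unique minimiser, namely the unique zero of $b_i'$. Existence and uniqueness of the short-term MEMM follow at once.

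For the long-term problem \eqref{eq:long} I would study $B(\lambda_1^*,\lambda_2^*)=\big(\lambda_2^*b_1(\lambda_1^*)+\lambda_1^*b_2(\lambda_2^*)\big)/(\lambda_1^*+\lambda_2^*)$ on the open quadrant, noting that $B=\tfrac{\lambda_2^*}{\lambda_1^*+\lambda_2^*}b_1+\tfrac{\lambda_1^*}{\lambda_1^*+\lambda_2^*}b_2$ is a convex combination of $b_1,b_2$, whence $0\le B$. A direct computation gives
\[
\frac{\partial B}{\partial\lambda_1^*}
=\frac{\lambda_2^*\big[(\lambda_1^*+\lambda_2^*)b_1'(\lambda_1^*)+b_2(\lambda_2^*)-b_1(\lambda_1^*)\big]}{(\lambda_1^*+\lambda_2^*)^2},
\]
and symmetrically for $\partial B/\partial\lambda_2^*$. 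For existence I would establish coercivity relative to the boundary: since $b_i'\to-\infty$ as $\lambda_i^*\to0^+$, the partial derivative above tends to $-\infty$ there, so for fixed $\lambda_2^*$ one can strictly lower $B$ below its limiting value $b_1(0^+)$ by moving inward, and likewise in the other variable; hence $\inf B<\min(b_1(0^+),b_2(0^+))$. On the other hand, using $b_i\ge0$ and $b_i(\lambda_i^*)/\lambda_i^*\to\infty$, a short case analysis shows that along any sequence leaving the compact subsets of $(0,\infty)^2$ one has $\liminf B\ge\min(b_1(0^+),b_2(0^+))$. Consequently every minimising sequence stays in a compact set and, by continuity, $B$ attains its infimum at an interior point, which is necessarily a critical point.

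The main obstacle is uniqueness, and for this I would use the following reduction. At an interior critical point the equations $\partial B/\partial\lambda_i^*=0$ read
\[
b_1'(\lambda_1^*)+b_2'(\lambda_2^*)=0,\qquad
(\lambda_1^*+\lambda_2^*)\,b_1'(\lambda_1^*)=b_1(\lambda_1^*)-b_2(\lambda_2^*).
\]
Introducing the single parameter $p:=b_1'(\lambda_1^*)=-b_2'(\lambda_2^*)$ and inverting the bijections $b_1',b_2'$, I would set $\lambda_1^*=\xi(p):=(b_1')^{-1}(p)$ and $\lambda_2^*=\eta(p):=(b_2')^{-1}(-p)$, so the whole system collapses to the scalar equation
\[
\Phi(p):=\big(\xi(p)+\eta(p)\big)p-b_1(\xi(p))+b_2(\eta(p))=0.
\]
Differentiating and cancelling with $b_1'(\xi)=p$ and $b_2'(\eta)=-p$ leaves the clean identity $\Phi'(p)=\xi(p)+\eta(p)>0$, so $\Phi$ is strictly increasing and has at most one zero. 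Thus there is at most one interior critical point; combined with the interior minimiser produced above, the long-term minimiser exists and is unique. The only genuinely delicate point is the boundary bookkeeping in the existence step — that the infimum strictly undercuts the boundary values $b_i(0^+)$ while no boundary approach can beat them — whereas uniqueness then follows cleanly from the monotone reduction $\Phi'=\xi+\eta>0$.
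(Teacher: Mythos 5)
Your proof is correct, and on the short-term problem \eqref{eq:short} it is essentially identical to the paper's: both eliminate $\sigma_i^*$ via \eqref{eq:cond}, arrive at the same $b_i(\lambda_i^*)$ (the paper's \eqref{eq:b0b1A}), and use that $b_i'$ is a strictly increasing bijection of $(0,\infty)$ onto $\R$ (the paper's \eqref{eq:der}). On the long-term problem \eqref{eq:long} you take a genuinely different route. The paper works with the critical-point system $\Phi_1=\Phi_2=0$ (its \eqref{eq1}--\eqref{eq2}): it shows $\pd\Phi_1/\pd\lambda_1^*>0$, solves $\Phi_1=0$ implicitly as $\lambda_1^*=\phi(\lambda_2^*)$ with $\phi'<0$, reduces to the monotone scalar equation $b_1'(\phi(\lambda_2^*))+b_2'(\lambda_2^*)=0$, and then certifies minimality via the Hessian entries ($B_{11},B_{22}>0$ and $B_{12}=0$ at the critical point). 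You instead prove existence by coercivity and compactness (every sequence leaving compact subsets of the quadrant has $\liminf B\ge\min(b_1(0^+),b_2(0^+))$, while $\inf B$ strictly undercuts this bound), and uniqueness by the Legendre-type substitution $p=b_1'(\lambda_1^*)=-b_2'(\lambda_2^*)$, which collapses the system to a scalar equation with the clean envelope identity $\Phi'(p)=\xi(p)+\eta(p)>0$; I checked this cancellation, using $b_1'(\xi)=p$ and $b_2'(\eta)=-p$, and it is valid. Your route actually buys more rigour at two points where the paper is thin: the paper's Hessian computation only certifies a local minimum, and its claim that the reduced scalar function crosses zero (that the decreasing term $b_1'(\phi(\cdot))$ does not dominate $b_2'$ in the limits) is not argued, whereas your compactness step produces an interior \emph{global} minimiser outright, and ``at most one critical point'' then makes it unique; what the paper's route buys is the explicit monotone dependence $\lambda_1^*=\phi(\lambda_2^*)$, $\phi'<0$, which it uses later. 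The one step you should write out in full is the boundary bookkeeping you flag yourself: note that minimising in $\lambda_1^*$ for each fixed $\lambda_2^*$ gives $\inf B<b_1(0^+)$, and symmetrically $\inf B<b_2(0^+)$, so indeed $\inf B<\min\bigl(b_1(0^+),b_2(0^+)\bigr)$ since both inequalities concern the single number $\inf B$; the $\liminf$ case analysis then goes through because $B$ is a convex combination of $b_1(\lambda_1^*)$ and $b_2(\lambda_2^*)$, $b_i\ge0$, and $b_i(\lambda)/\lambda\to\infty$ as $\lambda\to\infty$.
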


\begin{proof}
If $\sigma_i\neq0$, then \eqref{eq:cond} gives 
\begin{equation}\label{eq:sigma*}
\sigma_i^*=-\frac{c_i+\lambda_i^*h_i}{\sigma_i}.
\end{equation}
Hence problem \eqref{eq:short} is equivalent to minimisation of the function
 \[b_i=b_i(\lambda_i^*):=\lambda_i-\lambda_i^*
+\lambda_i^*\ln(\lambda_i^*/\lambda_i)
+\frac{1}{2\sigma_i^2}\left(c_i+\lambda_i^*h_i\right)^2
,\quad i\in D=\{1, 2\}.\]
If, additionally, $h_i=0$, then  problem \eqref{eq:short} has the unique solution $\lambda_i^*=\lambda_i$. 
In this case we return to the Markov modulated  diffusion process, 
see Remark \ref{rem:TDP} and \cite{ElliottAnnFin}. 
This confirms again that in this case the minimum of entropy and the Esscher transform \eqref{def:Esscher}
lead to the same result.

If $\sigma_i=0,\;h_i\neq0$, then we have jump-telegraph model (see Example \ref{ex1}).
In this case
the martingale condition \eqref{eq:cond} gives $\lambda_i^*=-c_i/h_i$,
which corresponds to the unique equivalent martingale measure.

On the contrary, if $\sigma_i\neq0,\;h_i\neq0$, then the minimal entropy and the Esscher transform 
\eqref{def:Esscher} give different results.
Observe that 
$b_i,\;i\in D=\{1, 2\},$ 
with $\sigma_i^*$ given by \eqref{eq:sigma*}
can be rewritten as
 \begin{equation}\label{eq:b0b1A}
 b_i=b_i(\lambda_i^*):=\lambda_i-\lambda_i^*
+\lambda_i^*\ln(\lambda_i^*/\lambda_i)
+\frac{C_i^2}{2}\left(\lambda_i^*-\alpha_i\right)^2
,\quad i\in D=\{1, 2\},
\end{equation}
where $C_i^2=h_i^2/\sigma_i^2>0$ and $\alpha_i=-c_i/h_i>0,\;i\in D=\{1, 2\}.$

Differentiating \eqref{eq:b0b1A} we have:
\begin{equation}\label{eq:der}
\begin{aligned}
b_1'(\lambda_1^*)\equiv &
\frac{h_1^2}{\sigma_1^2}(c_1/h_1+\lambda_1^*)+\ln(\lambda_1^*/\lambda_1),\\
b_2'(\lambda_2^*)\equiv &
\frac{h_2^2}{\sigma_2^2}(c_2/h_2+\lambda_2^*)+\ln(\lambda_2^*/\lambda_2).
\end{aligned}\end{equation}
We remark that
functions $b_1'$ and $b_2'$ vary monotonically from $-\infty$ to $+\infty$ as 
$\lambda_1^*$ and $\lambda_2^*$ increase from 0 to $+\infty$. Hence, the system
$b_1'(\lambda_1^*)=0,\; b_2'(\lambda_2^*)=0$ (and minimisation problem  \eqref{eq:short}) has the unique solution.

Moreover, 
 if  $\lambda_i^*<-c_i/h_i,$ then the solution $\lambda_i^*$
 of the corresponding equation $b_i'(\lambda_i^*)=0$ satisfies
 $\lambda_i^*,\;\lambda_i^*>\lambda_i,\;i\in D=\{1, 2\};$
if $\lambda_i^*>-c_i/h_i,$  then $\lambda_i^*<\lambda_i,\;i\in D=\{1, 2\}.$
Therefore\textup{,} the solution  $\lambda_i^*$  of \eqref{eq:short} 
is always between 
$\lambda_i$ and $-c_i/h_i,$ $i\in D=\{1, 2\},$
 whereas the Esscher transform gives $\lambda_i^*=\lambda_i,$ $i\in D=\{1, 2\}$.

We solve  the problem \eqref{eq:long} by differentiating again. System 
\[
\frac{\pd B}{\pd\lambda_1^*}=0,\qquad \frac{\pd B}{\pd\lambda_2^*}=0
\]
is equivalent to
\begin{align}
\label{eq1}
\Phi_1:=(\lambda_1^*+\lambda_2^*)b_1'(\lambda_1^*)+b_2(\lambda_2^*)-b_1(\lambda_1^*)=&0,\\
\Phi_2:=(\lambda_1^*+\lambda_2^*)b_2'(\lambda_2^*)+b_1(\lambda_1^*)-b_2(\lambda_2^*)=&0.\label{eq2}
\end{align}

By a similar  reasoning as before, one can easily see that system \eqref{eq1}-\eqref{eq2} also has (unique) solution.
Indeed, owing to \eqref{eq:b0b1A} and \eqref{eq:der} we have 
\[
\Phi_1=\lambda_2^*\ln(\lambda_1^*/\lambda_1)+b_2(\lambda_2^*)
-\lambda_1
+\lambda_1^*+\frac{C_1^2}{2}(\lambda_1^*-\alpha_1)^2+C_1^2(\lambda_1^*-\alpha_1)(\lambda_2^*+\alpha_1).
\]
Thus, 
\[
\frac{\pd \Phi_1}{\pd\lambda_1^*}=1+\frac{\lambda_2^*}{\lambda_1^*}+C_1^2(\lambda_1^*+\lambda_2^*)>0.
\]
Hence, 
function $\Phi_1$ increases monotonically from $-\infty$ to $+\infty$
as $\lambda_1^*$ increases from 0 to $+\infty$, which means that equation \eqref{eq1} has the unique solution 
 $\lambda_1^*=\phi(\lambda_2^*)>0$ for any fixed positive $\lambda_2^*$. 
 Therefore system  \eqref{eq1}-\eqref{eq2} 
 is equivalent to 
 \[
 b_1'(\phi(\lambda_2^*))+b_2'(\lambda_2^*)=0.
 \]
Differentiating this identity and \eqref{eq:der}
one can see that
\[
\phi'(\lambda_2^*)=-\frac{C_2^2+1/\lambda_2^*}{C_1^2+1/\phi(\lambda_2^*)}<0.
\]
Hence, function $ b_1'(\phi(\lambda_2^*))$ decreases
as $\lambda_2^*$ goes from 0 to $+\infty$,
whereas $b_2'(\lambda_2^*)$ strictly increases from $-\infty$ to $+\infty$.   Therefore 
system \eqref{eq1}-\eqref{eq2}  has the unique solution.
\par

Then, the second derivatives of $B$ are
\[\begin{aligned}
B_{11}=&\frac{\lambda_2^*}{\lambda_1^*+\lambda_2^*}b_1''(\lambda_1^*)
-\frac{2\lambda_2^*}{(\lambda_1^*+\lambda_2^*)^3}\Phi_1(\lambda_1^*, \lambda_2^*),\\
B_{22}=&\frac{\lambda_1^*}{\lambda_1^*+\lambda_2^*}b_2''(\lambda_2^*)
-\frac{2\lambda_1^*}{(\lambda_1^*+\lambda_2^*)^3}\Phi_2(\lambda_1^*, \lambda_2^*),\\
B_{12}=B_{21}=&\frac{\lambda_1^*}{(\lambda_1^*+\lambda_2^*)^3}\Phi_1(\lambda_1^*, \lambda_2^*)
+\frac{\lambda_2^*}{(\lambda_1^*+\lambda_2^*)^3}\Phi_2(\lambda_1^*, \lambda_2^*).
\end{aligned}\]
Here 
$B_{ij}=\dfrac{\pd^2 B}{\pd\lambda_i^*\pd\lambda_j^*},\;i, j\in D=\{1, 2\}$. Hence, point 
$(\lambda_1^*, \lambda_2^*)$, fitting for system \eqref{eq1}-\eqref{eq2}, gives the minimum of $B$.
\end{proof}

\begin{figure}[hbt]
\begin{center}
\includegraphics[scale=0.85]{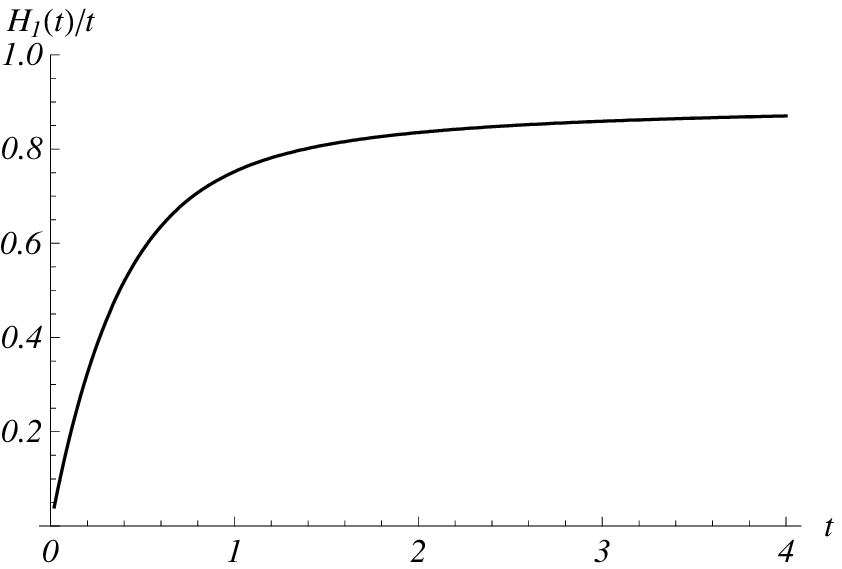}
$\,$
\includegraphics[scale=0.85]{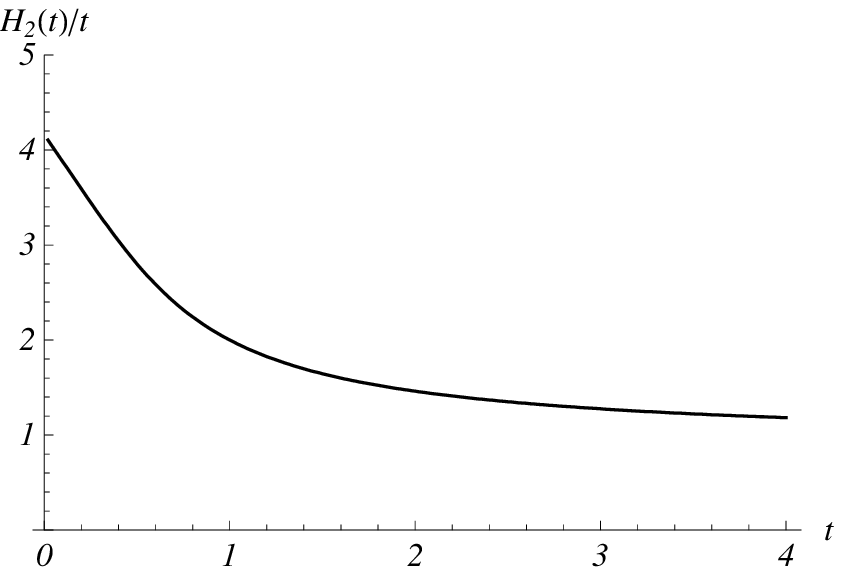}
\caption{Plot of $H_i(t)/t$, $t>0,$ where $H_i(t),\; i\in\{1, 2\},$ is the minimal entropy 
(for $\lambda_1=\lambda_2=1$, $\sigma_1=\sigma_2=1$, $c_1=-1$, $c_2=3$, $h_1=1$, $h_2=-0.1$).
}
\label{figure:1}
\end{center}
\end{figure}
%

\begin{figure}[hbt] \label{fig:2}
\begin{center}
\includegraphics[scale=0.85]{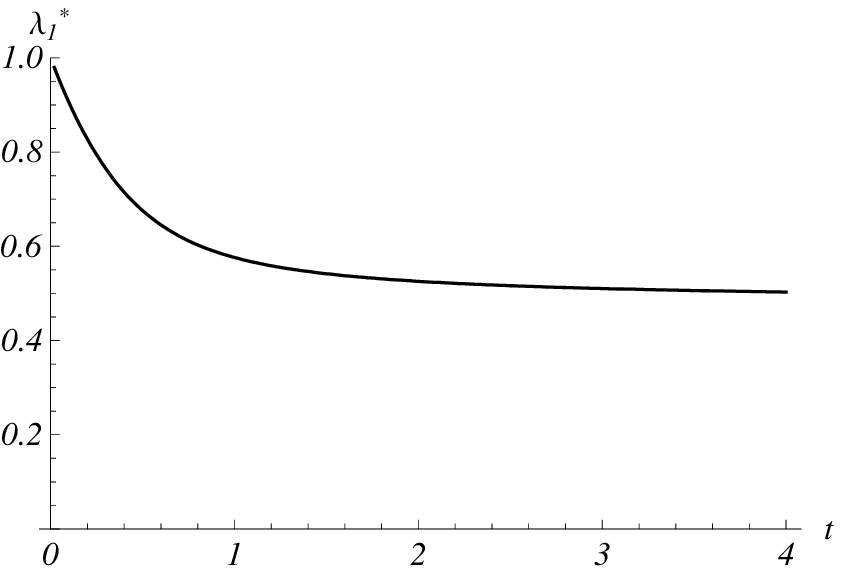}
$\,$
\includegraphics[scale=0.85]{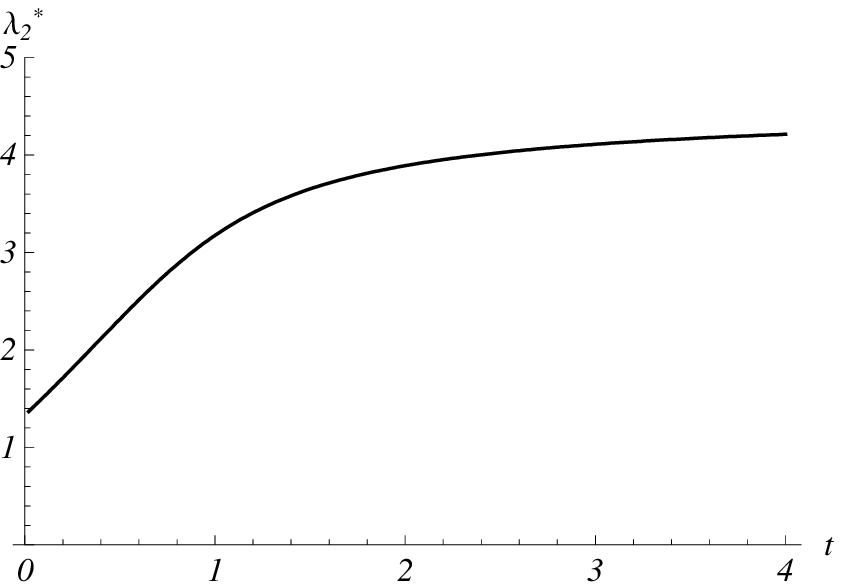}
\caption{Plot  of \textit{argmin} $H_1(t)$, for the same case of Figure \ref{figure:1}. 
}
\label{figure:2}
\end{center}
\end{figure}
%

%
\begin{remark}
Theorem \ref{theo:5.1} and Proposition \ref{proposition} show that the minimal entropy martingale 
measure differs from the measure supplied by Esscher transformation. 
This is confirmed by the plots presented in Fig.\ \ref{figure:1} and Fig.\ \ref{figure:2}, where 
an asymmetric situation is considered. 
Surprisingly, the minimal entropy is supplied by  $\lambda_1^*, \lambda_2^*$ which depend 
on time horizon. Moreover, the entropy functions $H_i(t),\;t>0,\;i\in\{1, 2\}$, are not linear 
\textup{(}cf. \eqref{E:FM}\textup{,} Remark \textup{\ref{rem:5.2}).}
 
In \textup{\cite{Elliott2007}} the Esscher transform is applied to
option pricing under a Markov-modulated jump-diffusion model.
\end{remark}

In the symmetric case, 
\begin{equation}\label{sym:case}
\lambda_1=\lambda_2,\;\sigma_1=\sigma_2~~\text{ and }~~c_1=-c_2,\; h_1=-h_2,\;c_1h_2=c_2h_1,
\end{equation}
the solution of the minimal entropy problem is constant.

\begin{proposition}
In the symmetric case \eqref{sym:case}
the problem
\begin{equation}\label{problem}
\begin{aligned}
H_1(t;\;\lambda_1^*, \lambda_2^*)&\to\min,\\
H_2(t;\;\lambda_1^*, \lambda_2^*)&\to\min
\end{aligned}\end{equation}
subject to condition \eqref{eq:cond}
has the unique solution $\lambda_1^*=\lambda_2^*=\const,$ which does not depend on time $t$.
\end{proposition}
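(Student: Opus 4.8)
The plan is to collapse the whole problem onto a single strictly convex one-dimensional function and then to prove a global lower bound that is saturated only at the symmetric point. First I would record what the symmetry assumption \eqref{sym:case} does to the data in \eqref{eq:b0b1A}. Since $\lambda_1=\lambda_2$, $\sigma_1=\sigma_2$, $h_1=-h_2$ and $c_1=-c_2$, the constants obey $C_1^2=h_1^2/\sigma_1^2=h_2^2/\sigma_2^2=C_2^2$ and $\alpha_1=-c_1/h_1=-c_2/h_2=\alpha_2$, so the two functions $b_1(\cdot)$ and $b_2(\cdot)$ of \eqref{eq:b0b1A} are literally the same function, say $b$. By the computation already carried out in the proof of Theorem \ref{theo:5.1} (there $b_i''(x)=1/x+C_i^2>0$ and $b_i'$ runs monotonically from $-\infty$ to $+\infty$), this $b$ is strictly convex on $(0,\infty)$ and has a unique minimiser $\lambda^*>0$ fixed by $b'(\lambda^*)=0$. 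The key point is that $\lambda^*$ is determined by the static data $\lambda,\sigma,c,h$ alone, with no reference to $t$.

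Next I would rewrite \eqref{eq:entropy-const} in terms of $b$. Putting $p=\lambda_1^*$, $q=\lambda_2^*$ and $s=p+q$, the coefficients \eqref{def:d} read $B=\dfrac{q\,b(p)+p\,b(q)}{p+q}$, $A_1=\dfrac{p(b(p)-b(q))}{(p+q)^2}$, $A_2=\dfrac{q(b(q)-b(p))}{(p+q)^2}$, so that $H_1=Bt+A_1(1-\rme^{-st})$ and $H_2=Bt+A_2(1-\rme^{-st})$. Two observations organise the argument. Since $B$ is a convex combination of $b(p)$ and $b(q)$, one has $B\geq b(\lambda^*)$ with equality iff $b(p)=b(q)=b(\lambda^*)$, i.e. iff $p=q=\lambda^*$. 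Moreover on the diagonal $p=q$ one gets $A_1=A_2=0$ and $H_1=H_2=b(p)\,t$, so the natural candidate is $p=q=\lambda^*$, where $H_1=H_2=b(\lambda^*)t$.

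The core step is the global bound $H_1(t;p,q)\geq b(\lambda^*)t$, with equality only at $p=q=\lambda^*$, and likewise for $H_2$; note that I do not assume the minimiser is symmetric but prove the bound over all admissible $(p,q)$. When $A_1\geq0$ this is immediate from $Bt\geq b(\lambda^*)t$ and $A_1(1-\rme^{-st})\geq0$. The delicate case is $A_1<0$, i.e. $b(q)>b(p)$: here I would use $b(p),b(q)\geq b(\lambda^*)$ to get $B-b(\lambda^*)\geq\dfrac{p(b(q)-b(p))}{p+q}=s|A_1|$, and then combine it with the elementary estimate $\rme^{-st}\geq1-st$ (equivalently $st\geq1-\rme^{-st}$) to obtain $(B-b(\lambda^*))t\geq s|A_1|t\geq|A_1|(1-\rme^{-st})$, which is exactly what absorbs the negative term $A_1(1-\rme^{-st})$. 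Because $s,t>0$, both inequalities are strict off the diagonal, forcing $p=q=\lambda^*$ at equality. The bound for $H_2$ follows from the involution $(p,q)\mapsto(q,p)$, which leaves $B,s$ fixed and swaps $A_1\leftrightarrow A_2$, hence exchanges $H_1\leftrightarrow H_2$. Thus $(\lambda^*,\lambda^*)$ simultaneously and uniquely solves \eqref{problem} for every $t>0$, with $\lambda^*$ independent of $t$, as claimed. I expect the main obstacle to be precisely the $A_1<0$ regime, where the linear drift $Bt$ must be shown to dominate the possibly negative exponential correction; the two short estimates $B-b(\lambda^*)\geq s|A_1|$ and $st\geq1-\rme^{-st}$ are the crux of the matter.
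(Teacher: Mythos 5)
Your argument is correct, and it takes a genuinely more self-contained route than the paper's own proof. The paper proves the proposition via the Taylor representation
\begin{equation*}
H_1=b_1t+\lambda_1^*\,(b_1-b_2)\,t^2\,\phi\bigl((\lambda_1^*+\lambda_2^*)t\bigr),\qquad
\phi(x)=\sum_{n=0}^\infty\frac{(-1)^{n+1}x^n}{(n+2)!},
\end{equation*}
then observes that in the symmetric case \eqref{sym:case} one has $b_1(\cdot)\equiv b_2(\cdot)=b$, and concludes that the problem collapses to the one-dimensional problem \eqref{eq:short}, whose unique solution is the minimiser $\lambda^*$ of the strictly convex $b$, with linear entropy $H_1=H_2=b(\lambda^*)t$. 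That published argument is essentially a sketch: since $\phi(x)=(1-\rme^{-x}-x)/x^2<0$ for $x>0$, the correction term $p\,(b(p)-b(q))\,t^2\phi(st)$ is \emph{positive} whenever $b(p)<b(q)$, so off-diagonal competitors with $b(p)\neq b(q)$ are not explicitly ruled out there. Your global bound fills exactly this gap: your key identity is correct, since in the case $b(q)>b(p)$
\begin{equation*}
B-s|A_1|=\frac{q\,b(p)+p\,b(q)}{s}-\frac{p\,(b(q)-b(p))}{s}=\frac{(p+q)\,b(p)}{s}=b(p)\geq b(\lambda^*),
\end{equation*}
and combining $(B-b(\lambda^*))t\geq s|A_1|t$ with the elementary $st\geq1-\rme^{-st}$ (strict for $st>0$) gives $H_1(t;p,q)\geq b(\lambda^*)t$ with equality if and only if $p=q=\lambda^*$; the swap $(p,q)\mapsto(q,p)$, which fixes $B$ and $s$ and exchanges $A_1\leftrightarrow A_2$, settles $H_2$. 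Both proofs rest on the same structural inputs --- the reduction \eqref{eq:sigma*} of the constraint \eqref{eq:cond}, the coefficients \eqref{def:d} in \eqref{eq:entropy-const}, and strict convexity of $b$ from \eqref{eq:b0b1A} --- but what your version buys is a complete verification that $(\lambda^*,\lambda^*)$ is the \emph{global and simultaneous} minimiser of both objectives for every $t>0$, whereas the paper's series representation identifies the symmetric candidate without explicitly dominating the off-diagonal points.
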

\begin{proof}
By Taylor representation we have 
\[\begin{aligned}
H_1=&b_1t+\lambda_1^*(b_1-b_2)t^2\cdot\phi\left((\lambda_1^*+\lambda_2^*)t\right),\\
H_2=&b_2t+\lambda_2^*(b_2-b_1)t^2\cdot\phi\left((\lambda_1^*+\lambda_2^*)t\right),
\end{aligned}\]
where 
\[
\phi(x)=\sum_{n=0}^\infty\frac{(-1)^{n+1}x^n}{(n+2)!}.
\]
Notice that in the symmetric case \eqref{sym:case} function $b_1$ and $b_2$ (see \eqref{eq:b0b1A}) are equal:
$b_1(\lambda)\equiv b_2(\lambda)=b,$ $\lambda>0$. Hence, the problem \eqref{problem} 
has the unique constant solution, $\lambda_1^*=\lambda_2^*=\lambda^*$,
 which corresponds to problem \eqref{eq:short}. The minimal entropy is linear,
  $H_1(t)=H_2(t)=bt$, where $b=b(\lambda^*)$.
\end{proof}


\bibliographystyle{alea3}

\bibliography{ }

\end{document}